\newcommand{\titlename}{Stochastic Optimization Schemes for Performative Prediction with Nonconvex Loss}
\def\cross{\ding{55}}
\newtheorem{Lemma}{Lemma}
\newtheorem{Theorem}{Theorem}
\newtheorem{Def}{Definition}
\newtheorem{Corollary}{Corollary}
\newtheorem{Remark}{Remark}
\newtheorem{Assumption}{\textbf{A}\!}
\newtheorem{AssumptionW}{\textbf{W}\!}
\newtheorem{AssumptionC}{\textbf{C}\!}
\title{\titlename}
\author{%
  Qiang Li \qquad Hoi-To Wai  \\ \\
  Department of Systems Engineering and Engineering Management \\
  The Chinese University of Hong Kong, Shatin, Hong Kong SAR of China \\
  \texttt{\{liqiang, htwai\}@se.cuhk.edu.hk} \\
}
\begin{document}

\maketitle

\begin{abstract}
This paper studies a risk minimization problem with decision dependent data distribution. The problem pertains to the performative prediction setting in which a trained model can affect the outcome estimated by the model. Such dependency creates a feedback loop that influences the stability of optimization algorithms such as stochastic gradient descent (SGD). We present the first study on performative prediction with smooth but possibly non-convex loss. We analyze a greedy deployment scheme with SGD (SGD-GD). Note that in the literature, SGD-GD is often studied with strongly convex loss. We first propose the definition of stationary performative stable (SPS) solutions through relaxing the popular performative stable condition. We then prove that SGD-GD converges to a biased SPS solution in expectation. We consider two conditions of sensitivity on the distribution shifts: (i) the sensitivity is characterized by Wasserstein-1 distance and the loss is Lipschitz w.r.t.~data samples, or (ii) the sensitivity is characterized by total variation (TV) divergence and the loss is bounded. In both conditions, the bias levels are proportional to the stochastic gradient's variance and sensitivity level. 
Our analysis is extended to a lazy deployment scheme where models are deployed once per several SGD updates, and we show that it converges to an SPS solution with reduced bias. Numerical experiments corroborate our theories.
\end{abstract}

\section{Introduction}
When trained models are deployed in social contexts, the outcomes these models aim to predict can be influenced by the models themselves. Taking email spam detection as an example. On one hand, email service providers design filters to protect their users by identifying spam emails. On the other hand, spammers aim to circumvent these filters to distribute malware and advertisements. Each time a new classifier is deployed, spammers who are interspersed within the general population may alter the characteristics of their messages to evade detection. 
The above example pertains to the strategic classification problem \citep{dalvi2004adversarial, cai2015optimum, hardt2016strategic, bjorkegren2020manipulation} and can be modelled by dataset shifts \citep{quinonero2022dataset}.

The scenarios described can be captured by the recently proposed performative prediction problem, which called the above dataset shift phenomena as the `performative' effect. \citet{perdomo2020performative} proposed to study the risk minimization problem with a \emph{decision-dependent} data distribution:
\begin{align}\label{p1}
\textstyle \min_{\prm\in \RR^d} ~V(\prm) \eqdef \EE_{Z\sim {\cal D}(\prm)} [\ell(\prm; Z)],
\end{align}
where $\ell(\prm; z)$ is a loss function 
that is continuously differentiable with respect to (w.r.t.) $\prm$ for any given data sample $z \in {\sf Z}$, and ${\sf Z} \subseteq \RR^p$ is the sample space. The dependence on $\prm$ in ${\cal D}(\prm)$ explicitly captures the distribution shift effect of prediction models on data samples. The objective function $V(\prm)$ is also known as the performative risk.

The decision variable $\prm$ in \eqref{p1} affects simultaneously the distribution and the loss function. As such, optimizing $V(\prm)$ directly is often difficult. \citet{mendler2020stochastic} considered the following stochastic gradient (SGD) recursion: for any $t \ge 0$ and let $\gamma_{t+1} > 0$ be a stepsize, 
\beq\label{algo:sgd1}
    \prm_{t+1} = \prm_{t} - \gamma_{t+1} \grd \ell(\prm_t; Z_{t+1} ), \text{ where } Z_{t+1} \sim {\cal D}(\prm_t).
\eeq
The above is known as the \emph{greedy deployment} scheme with {SGD} ({\algoname}), where the learner deploys the current trained model $\prm_t$ before drawing samples from ${\cal D}(\prm_t)$. The {\algoname} scheme describes a training procedure when the learner is \emph{unaware of the performative phenomena} with the data distribution ${\cal D}(\cdot)$, which is plausible in many applications. Relevant studies to \eqref{algo:sgd1} include lazy deployment where the learner deploys a new model only once every few iterations, or repeated risk minimization; see \citep{mendler2020stochastic,perdomo2020performative, zrnic2021leads}.

Existing convergence analysis of \eqref{algo:sgd1} are limited to the case when $\ell(\prm;z)$ is \emph{strongly convex}\footnote{Note that $V(\prm)$ is still non-convex.} w.r.t.~$\prm$. \citet{perdomo2020performative} introduced the concept of \emph{performative stable} (PS) solution as the unique minimizer of \eqref{p1} with fixed distribution. The PS solution, while being different from an optimal or stationary solution to \eqref{p1}, is shown to be the unique limit point of the recursion \eqref{algo:sgd1} provided that the sensitivity of the distribution map ${\cal D}(\cdot)$, measured w.r.t.~the Wasserstein-1 ($W_1$) distance, is upper bounded by a factor proportional to the strong convexity modulus of $\ell(\prm;z)$ \citep{mendler2020stochastic}. Furthermore, such convergence condition is proven to be tight \citep{perdomo2020performative} and the analysis has been extended to proximal algorithm \citep{drusvyatskiy2023stochastic}, online optimization \citep{cutler2023stochastic}, saddle point seeking \citep{wood2023stochastic}, multi-agent consensus learning \citep{li2022multi}, non-cooperative learning \citep{wang2023network, narang2023multiplayer, piliouras2023multi}, state-dependent learning \citep{brown2022performative, li2022state}, etc.

This paper provides the \emph{first analysis} of {\algoname} and related stochastic optimization schemes in performative prediction when $\ell(\prm;z)$ is \emph{smooth but possibly non-convex}. This is a more common scenario in machine learning than the strongly convex loss considered in the prior works, e.g., it covers the case of training neural network (NN) models. 
We notice that existing works are limited to imposing structure on the loss function $\ell(\prm;z)$ and the distribution ${\cal D}(\prm)$, utilizing advanced algorithms that demand extra knowledge on ${\cal D}(\prm)$, etc., as we overview below.

\begin{figure*}
\hspace{-1cm}
\begin{center}
\resizebox{\linewidth}{!}{
\begin{tabular}{l c c c c c l}
\toprule
\bfseries Literature  &  \bfseries Ncvx-$\ell$ & \bfseries Ncvx-$V$ & \bfseries Sensitivity &  \bfseries Algorithm & \bfseries Rate & $\prm_{\infty}$ \\ 
\midrule
\citep{izzo2021learn} & \cross & \checkmark & Loc.$^\dagger$ & 2-Phase & ${\cal O}(T^{-\frac{1}{2}})$ & $\grd V(\cdot) = {\bm 0}$ \\
\citep{miller2021outside} & \cross & \cross & $W_1$ & 2-Phase & ${\cal O}(T^{-1})$ & $\min V(\prm)$ \\
\citep{mendler2020stochastic} & \cross & \checkmark & $W_1$ & {\algoname} & ${\cal O}( T^{-1} )$ & PS \\
\citep{mofakhami2023performative} & \cross$^\ddagger$ & \checkmark & $\chi^2$  & RRM$^\ddagger$ & Linear$^\ddagger$ & PS$^\ddagger$
\\
\cellcolor{gray!15} 
&\cellcolor{gray!15}\checkmark &\cellcolor{gray!15}\checkmark &\cellcolor{gray!15}TV or $W_{1}$ &\cellcolor{gray!15}{\algoname}  & \cellcolor{gray!15} ${\cal O} ( T^{-\frac{1}{2}} )$ & \cellcolor{gray!15}${\cal O}(\epsilon)$-SPS
\\
{\cellcolor{gray!15}\multirow{-2}{*}{\centering \bf This Work}} 
&\cellcolor{gray!15}\checkmark &\cellcolor{gray!15}\checkmark &\cellcolor{gray!15}TV or $W_{1}$ &\cellcolor{gray!15}{SGD-Lazy}$^\star$  & \cellcolor{gray!15} ${\cal O} ( T^{-\frac{1}{2}} )$ & \cellcolor{gray!15}${\cal O}(\epsilon^2 )$-SPS \\
\bottomrule
\end{tabular}}
\end{center}
\captionof{table}{{\bf Comparison of Results in Existing Works}. `Sensitivity' indicates the distance metric imposed on ${\cal D}(\prm)$ when the latter is subject to perturbation, given in the form $d( {\cal D}(\prm), {\cal D}(\prm') ) \leq \epsilon \| \prm - \prm' \|$ such that $d(\cdot,\cdot)$ is a distance metric between distributions. `$\theta_{\infty}$' indicates the type of convergent points: `PS' refers to performative stable solution [cf.~\eqref{eq:ps}], `SPS' refers to Def.~\ref{def:sps}.
\\[.1cm]
{${}^\dagger$\cite{izzo2021learn} assumed that ${\cal D}(\prm)$ belongs to the location family, i.e., ${\cal D}(\prm) = {\cal N}(f(\prm); \sigma^2)$. }
\\
${}^\ddagger$\citet{mofakhami2023performative} considered $\ell(\prm; z) = \tilde\ell(f_{\prm}(x), y)$ with strongly convex $\tilde{\ell}(\cdot,y)$. The RRM requires solving a non-convex optimization at each recursion.\\
${}^\star$SGD-Lazy refers to the SGD method with lazy deployment scheme, which fixes the deployed model for $K$ iterations before the next deployment; see \S\ref{sec:lazy}.
}\label{tab1}
\end{figure*}

{\bf Related Works.} In the non-convex setting, the most related work to ours is \citep{mofakhami2023performative} which proved that a variant of PS solution can be found when training NN in the performative prediction setting, i.e., a special case with non-convex loss. However, their analysis is restrictive: (i) it requires a loss function of the form $\ell( \prm ; z ) = \tilde{\ell}( f_\prm( x ) ; y )$ where $\tilde{\ell}(\hat{y} ;y)$ is strongly convex w.r.t.~$\hat{y}$, (ii) it only analyzes the case of training NN using a repeated risk minimization (RRM) procedure which \emph{exactly} minimizes a non-convex objective function at each step.
In comparison, we concentrate on stochastic (first order) optimization schemes and require only smoothness for $\ell(\cdot;z)$.

Other works departed from tackling the PS solution and considered alternative algorithms to directly minimize $V(\prm)$. For example, \citet{roy2022constrained} assumed that unbiased estimates of $\grd V(\prm)$ is available and studied the convergence of stochastic conditional gradient algorithms towards a stationary solution of $V(\prm)$, \citet{li2022state} assumed bounded biasedness w.r.t.~$\grd V(\prm)$ in \eqref{algo:sgd1} and show that \eqref{algo:sgd1} converges to a biased stationary point of $V(\prm)$. Notice that estimating $\grd V(\prm)$ requires knowledge on ${\cal D}(\prm)$ which has to be learnt separately. To circumvent this difficulty, two phases algorithms are studied in \citep{miller2021outside, izzo2021learn} that learn ${\cal D}(\prm)$ via a large batch of samples at the first stage, then optimize $\prm$ later (see \citep{zhu2024online} for a two-timescale type online algorithm), derivative free optimization are studied in \citep{miller2021outside, ray2022decision, haitong2023two}, and confidence bound methods in \citep{jagadeesan2022regret}. In addition, \citet{miller2021outside} proposed a mixture dominance assumption that can imply the strong convexity of $V(\prm)$. We remark that \citep{zhao2022optimizing} studied conditions to ensure $V(\prm)$ to be weakly convex. Our work does not require such advanced algorithms and show that stochastic (first order) optimization converges towards a similar solution as the PS solution. We display a comparison between these related works in Table \ref{tab1}.

{\bf Our Contributions}: 
This work provides the \emph{first} analysis of stochastic gradient-based methods for performative prediction with smooth but possibly \emph{non-convex} losses. Our contributions are:
\begin{itemize}[leftmargin=*]
    \item We propose the concept of \emph{stationary performative stable} (SPS) solutions which is a relaxation of the commonly used performative stable (PS) condition \citep{perdomo2020performative}. The relaxation is necessary for handling non-convex losses using first-order methods.
    \item We show that the stochastic gradient method with greedy deployment ({\algoname}) finds a biased SPS solution. Assume that the distribution shift is $\epsilon$-sensitive, i.e., it holds $d( {\cal D}(\prm), {\cal D}( \prm') ) \leq \epsilon \| \prm - \prm' \|$ for some distance measure $d(\cdot, \cdot)$ between the shifted distributions ${\cal D}(\prm), {\cal D}( \prm')$, {\algoname} converges at a rate of ${\cal O}(T^{-\frac{1}{2}})$ in expectation to an ${\cal O}( \epsilon )$-SPS solution. The bias level is further improved to ${\cal O}( \epsilon^2 )$ when the gradient is exact. 
    \item Our analysis relies on constructing a time varying Lyapunov function that may shed new lights for non-gradient stochastic approximation \citep{dieuleveut2023stochastic}. We studied two alternative conditions on the distance metric between distributions. When $d(\cdot,\cdot)$ is the Wasserstein-1 distance, {\algoname} converges to a biased SPS solution for Lipschitz loss function. When $d(\cdot,\cdot)$ is the total variation (TV) distance, {\algoname} converges to a biased SPS solution for bounded loss function.
    \item We extend the analysis to the lazy deployment scheme with SGD \citep{mendler2020stochastic}. The latter scheme finds an \emph{SPS solution with reduced bias} as the epoch length of lazy deployment grows.
\end{itemize}
Lastly, we provide numerical examples on synthetic and real data to validate our theoretical findings.
The rest of this paper is organized as follows.
\S\ref{sec:assume} introduces the problem setup and assumptions for establishing our convergence results of {\algoname}. Furthermore, we highlight the challenges in analyzing non-convex performative prediction. \S\ref{sec:main} introduces the concept of SPS solutions and presents the convergence results for {\algoname} under two alternative assumptions on the distribution shifts. We also outline the use of a time varying Lyapunov function to handle the dynamic nature of {\algoname}. \S\ref{sec:lazy} shows the results for the lazy deployment scheme. Lastly, \S\ref{sec:exp} provides numerical experiments to illustrate our results. 

\noindent {\bf Notations.}
Let $\RR^d$ be the $d$-dimensional Euclidean space equipped with inner product $\Pscal{\cdot}{\cdot}$ and induced norm $\norm{x} = \sqrt{\Pscal{x}{x}}$. 
Let ${\cal S}$ be a (measurable) sample space, and $\mu, \nu$ are two probability measures defined as ${\cal S}$. 
$\EE[\cdot]$ denotes taking expectation w.r.t all randomness, $\EE_{t}[\cdot] \eqdef \EE_{t}[\cdot | {\cal F}_{t}]$ means taking conditional expectation on the filtration ${\cal F}_t \eqdef \sigma(\{\prm_0, \prm_1, \cdots, \prm_t\})$, where $\sigma(\cdot)$ is the sigma-algebra generated by the random variables in the operand and $\{ \prm_t \}$ is the sequence of iterates generated by the {\algoname} scheme \eqref{algo:sgd1}. 

\section{Stationary Condition for Performative Stability}\label{sec:assume}
This section prepares the analysis of \eqref{p1} with {\algoname} and related schemes in the non-convex loss setting.
To fix idea, we define the decoupled performative risk and the decoupled partial gradient:
\beq \label{eq:def-decouple}
    J(\prm_1; \prm_2) = \EE_{Z \sim {\cal D}(\prm_2)} \left[ \ell(\prm_1; Z ) \right], \quad \grd J(\prm_1; \prm_2) = \EE_{Z \sim {\cal D}(\prm_2)} \left[ \grd \ell(\prm_1; Z)\right].
\eeq
Observe that while $V(\prm) = J(\prm;\prm)$, $\grd J( \prm; \prm ) \neq \grd V( \prm )$ in general since $\grd J( \prm; \prm )$ only represents a partial gradient of $V(\prm)$; see \citep{izzo2021learn}. In \eqref{algo:sgd1}, the conditional expectation of the stochastic gradient update term satisfies $\EE_t[ \grd \ell( \prm_t; Z_{t+1} ) ] = \grd J( \prm_t; \prm_t )$.

If the loss $\ell(\prm; z)$ is strongly convex w.r.t.~$\prm$, then the decoupled performative risk $J( \prm ; \bar{\prm} )$ admits a unique minimizer w.r.t.~$\prm$ for any $\bar{\prm}$. It has hence motivated \citep{perdomo2020performative} to study the \emph{performative stable} (PS) solution $\prm_{PS}$ which is defined as a fixed point to the map 
\beq \label{eq:ps}
\textstyle 
{\cal T}(\bar\prm) := \argmin_{ \prm \in \RR^d } J( \prm ; \bar\prm ),~~\text{i.e.,}~~\prm_{PS} = {\cal T}( \prm_{PS} ).
\eeq 
In the above, the uniqueness and existence of $\prm_{PS}$ follows by observing that ${\cal T}(\cdot)$ is a contraction if and only if the sensitivity of ${\cal D}(\prm)$, i.e., the `smoothness' of ${\cal D}(\prm)$ w.r.t.~$\prm$, is upper bounded by the inverse of condition number of $\ell(\prm ; z)$. The convergence of {\algoname} follows by analyzing \eqref{algo:sgd1} as a stochastic approximation (SA) scheme for the repeated risk minimization (RRM) procedure $\prm \leftarrow {\cal T}(\prm)$ \citep{mendler2020stochastic}.

For the case of \emph{non-convex} loss in this paper, the analysis becomes more nuanced since the map ${\cal T}(\cdot)$ is no longer well-defined, e.g., there may exist more than one minimizers to $\min_{ \prm } J( \prm ; \bar{\prm} )$. Our remedy is to concentrate on the following non-convex counter part to the PS solution:
{\begin{Def}\label{def:sps}({\bf $\delta$ stationary performative stable solution})
    Let $\delta \geq 0$, the vector $\prmstar \in \RR^d$ is said to be an $\delta$ stationary performative stable ($\delta$-SPS) solution of (\ref{p1}) if:
    \beq \label{eq:deltasps}
        \norm{ \grd J( \prmstar; \prmstar ) }^2 = \norm{\EE_{ Z \sim {\cal D}(\prmstar)} \left[  \grd \ell(\prmstar; Z )\right]}^2 \leq \delta.
    \eeq
\end{Def}
We also say that $\prm_{SPS}$ is an (exact) SPS solution if it satisfies \eqref{eq:deltasps} with $\delta=0$. In other words, $\delta \geq 0$ measures the stationarity of a solution. Notice that if $\ell(\prm;z)$ is strongly convex w.r.t.~$\prm$, then an SPS solution is also a PS solution defined in \citep{perdomo2020performative}.}

Although \eqref{eq:deltasps} is similar to the usual definitions of stationary solution in smooth optimization, there is a subtle but critical difference since $\grd J( \prm ; \prm )$ may not be the \emph{gradient} of any function in $\prm$.
For example, consider $\ell(\prm;z) = (1/2) \| \prm - z \|^2$ and ${\cal D}(\prm) \equiv {\cal N} ( A \prm , {\bm I} )$ for some square but asymmetric matrix $A$, the map $\grd J( \prm ; \prm ) = ( I - A ) \prm$ has a Jacobian  of $I - A$ which is not symmetric.
Furthermore, we observe that the mean field for {\algoname} scheme \eqref{algo:sgd1} is $\grd J( \prm ; \prm )$, which is not a gradient.
The {\algoname} scheme is thus a special case of non-gradient SA scheme \citep{dieuleveut2023stochastic}. 

To get further insight, as investigated in \citep{dieuleveut2023stochastic}, a common analysis framework of non-gradient SA scheme is by identifying a smooth Lyapunov function linked to the recursion \eqref{algo:sgd1}. When $\ell(\cdot;z)$ is strongly convex, we may study the Lyapunov functions as the squared distance $\| \prm - \prm_{PS} \|^2$ [cf.~\eqref{eq:ps}]. It can be shown that the properties required in \citep{dieuleveut2023stochastic} are satisfied under the conditions analyzed by \citep{mendler2020stochastic}. However, in the case of non-convex loss, identifying a suitable Lyapunov function for non-gradient SA scheme is hard. In the next section, we demonstrate how to address this challenge by identifying a time varying Lyapunov function for {\algoname}. 

\section{Main Results} \label{sec:main}
This section presents theoretical results on the {\algoname} scheme with non-convex loss. We first show how to construct a time varying Lyapunov function tailor made for \eqref{algo:sgd1}. We then show the convergence of {\algoname} under two different sets of conditions.

We introduce two basic and natural assumptions on the risk minimization problem \eqref{p1}:
\begin{Assumption}\label{assu:lip_grd}
     For any $z \in {\sf Z}$, there exists a constant $L \geq 0$ such that  
        \beq
            \norm{\grd \ell(\prm; z) - \grd \ell(\prm^\prime; z)} \leq L\norm{\prm- \prm^\prime},~\forall~\prm, \prm^\prime \in \RR^d, 
        \eeq
        where $\grd \ell(\prm; z)$ denotes the gradient of $\ell( \prm; z )$ \wrt $\prm$. Moreover, there exists a constant $\ell^\star > -\infty$ such that $\ell( \prm; z ) \geq \ell^\star$ for any $\prm \in \RR^d$.
\end{Assumption}
\begin{Assumption}\label{assu:var}
    For any fixed $\prm_1, \prm_2\in \RR^d$, 
    the stochastic gradient $\grd \ell(\prm_1; Z)$, $Z \sim {\cal D}(\prm_2)$ is unbiased such that $\EE_{Z\sim {\cal D}(\prm_2)}[\grd \ell(\prm_1; Z)] = \grd J(\prm_1; \prm_2)$, and
    there exists constants $\sigma_0, \sigma_1 \geq 0$ such that
    \begin{align}
        \EE_{Z \sim {\cal D}(\prm_2)} \left[ \norm{\grd \ell(\prm_1; Z) - \grd J(\prm_1; \prm_2)}^2 \right] \leq \sigma_0^2 + \sigma_1^2 \norm{\grd J(\prm_1; \prm_2)}^2.
    \end{align}
\end{Assumption}
\noindent Note that A\ref{assu:lip_grd}, \ref{assu:var} are standard assumptions that hold for a wide range of applications and the respective stochastic optimization based training schemes. For instance, A\ref{assu:lip_grd} requires the loss function to be smooth, while A\ref{assu:var} assumes the stochastic gradient estimates to have a variance that may grow with $\norm{\grd J(\prm_1; \prm_2)}^2$. For the \emph{non performative prediction} setting where the data distribution is not shifted by $\prm$, i.e., ${\cal D}( \prm ) \equiv {\cal D}$, these assumptions guarantee that SGD algorithm (i.e., {\algoname} with $Z_{t+1} \sim {\cal D}$) to converge to a stationary solution of \eqref{p1} with a suitable step size schedule \citep{ghadimi2013stochastic}. In particular, in this case A\ref{assu:lip_grd} implies that $\EE_{Z \sim {\cal D}} [ \ell( \prm; Z ) ]$ is a smooth function and serves as a Lyapunov function for the SGD algorithm.

In this light, it might be tempting to use the performative risk $V(\prm)$ [cf.~\eqref{p1}] as the Lyapunov function for \eqref{algo:sgd1} and directly adopt the analysis in \citep{dieuleveut2023stochastic}. However, the condition A\ref{assu:lip_grd} is insufficient to guarantee that $V(\prm)$ is smooth, and the mean field of \eqref{algo:sgd1} may not be aligned with $\grd V(\prm)$. Instead,  
from A\ref{assu:lip_grd}, \ref{assu:var}, we proceed with a descent-like lemma for the iterates of {\algoname}:
\begin{Lemma} \label{lem:descent}
Under A\ref{assu:lip_grd}, \ref{assu:var}. Suppose that the step size satisfies $\sup_{t \geq 1} \gamma_t \leq 1 / ( L (1+\sigma_1^2) ) $, then for any $t \geq 0$, the sequence of iterates $\{ \prm_t \}_{t \geq 0}$ generated by {\algoname} \eqref{algo:sgd1} satisfies 
\beq \label{eq:descent}
 \frac{\gamma_{t+1}}{2} \| \grd J(\prm_t; \prm_t) \|^2 & \leq J( \prm_t; \prm_t ) - \EE_t[ J( \prm_{t+1}; \prm_{t} ) ] + \frac{L}{2} \sigma_0^2 \gamma_{t+1}^2 .
\eeq 
\end{Lemma}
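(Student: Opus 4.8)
The plan is to exploit the decoupled structure in \eqref{eq:def-decouple} by \emph{freezing the second argument} of $J$. Fix $t$ and condition on ${\cal F}_t$, so that $\prm_t$ is deterministic. Consider the map $\prm_1 \mapsto J(\prm_1; \prm_t) = \EE_{Z \sim {\cal D}(\prm_t)}[\ell(\prm_1; Z)]$: since the distribution ${\cal D}(\prm_t)$ does \emph{not} depend on $\prm_1$ and $\ell(\cdot; z)$ is $L$-smooth for every $z$ by A\ref{assu:lip_grd}, the function $J(\cdot; \prm_t)$ is itself $L$-smooth, with gradient $\grd J(\prm_1; \prm_t) = \EE_{Z \sim {\cal D}(\prm_t)}[\grd \ell(\prm_1; Z)]$ (differentiating under the integral as in \eqref{eq:def-decouple}). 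Applying the standard descent lemma for $L$-smooth functions to the two points $\prm_t$ and $\prm_{t+1}$ — which are connected through the \emph{same} frozen distribution ${\cal D}(\prm_t)$ — gives
\beq
J(\prm_{t+1}; \prm_t) \leq J(\prm_t; \prm_t) + \Pscal{\grd J(\prm_t; \prm_t)}{\prm_{t+1} - \prm_t} + \frac{L}{2}\,\norm{\prm_{t+1} - \prm_t}^2 .
\eeq

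Next I would substitute the {\algoname} update $\prm_{t+1} - \prm_t = -\gamma_{t+1}\grd \ell(\prm_t; Z_{t+1})$, $Z_{t+1} \sim {\cal D}(\prm_t)$, and take the conditional expectation $\EE_t[\cdot]$. Unbiasedness in A\ref{assu:var} gives $\EE_t[\grd \ell(\prm_t; Z_{t+1})] = \grd J(\prm_t; \prm_t)$, so the inner-product term contributes $-\gamma_{t+1}\norm{\grd J(\prm_t; \prm_t)}^2$; for the quadratic term, a bias--variance split together with the variance bound in A\ref{assu:var} yields $\EE_t[\norm{\grd \ell(\prm_t; Z_{t+1})}^2] \leq (1+\sigma_1^2)\norm{\grd J(\prm_t; \prm_t)}^2 + \sigma_0^2$. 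Collecting terms (and using that $J(\prm_t;\prm_t)$ is ${\cal F}_t$-measurable) produces
\beq
\EE_t[J(\prm_{t+1}; \prm_t)] \leq J(\prm_t; \prm_t) - \gamma_{t+1}\Bigl( 1 - \tfrac{L\gamma_{t+1}(1+\sigma_1^2)}{2} \Bigr)\norm{\grd J(\prm_t; \prm_t)}^2 + \frac{L\sigma_0^2}{2}\gamma_{t+1}^2 .
\eeq
The stepsize condition $\gamma_{t+1} \leq 1/(L(1+\sigma_1^2))$ forces $1 - \tfrac{L\gamma_{t+1}(1+\sigma_1^2)}{2} \geq \tfrac12$, and rearranging gives \eqref{eq:descent}.

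The calculation itself is routine; there is no genuinely hard step. The one point that deserves care — and the reason this lemma is only a \emph{first} step rather than the whole story — is that \eqref{eq:descent} telescopes against $J(\prm_{t+1}; \prm_t)$, not against $V(\prm_{t+1}) = J(\prm_{t+1}; \prm_{t+1})$. Because the distribution argument must stay frozen at $\prm_t$ to invoke smoothness, one cannot simply sum \eqref{eq:descent} over $t$ to control $\sum_t \gamma_{t+1}\norm{\grd J(\prm_t;\prm_t)}^2$: consecutive terms involve $J(\cdot;\prm_t)$ and $J(\cdot;\prm_{t+1})$ evaluated with mismatched distributions, and reconciling them requires the sensitivity bounds on ${\cal D}(\cdot)$ and the time-varying Lyapunov construction developed later. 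For Lemma~\ref{lem:descent} in isolation, only A\ref{assu:lip_grd}, A\ref{assu:var}, and the descent lemma are used.
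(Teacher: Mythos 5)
Your proposal is correct and matches the paper's proof in all essentials: both arguments reduce to applying the descent lemma with the distribution frozen at $\prm_t$, substituting the update, taking $\EE_t[\cdot]$ with the unbiasedness and variance bound from A\ref{assu:var}, and invoking the step size condition to get the factor $1/2$. The only cosmetic difference is that the paper applies the descent inequality pointwise in $z$ and then integrates against the p.d.f.\ of ${\cal D}(\prm_t)$, whereas you first assert $L$-smoothness of $J(\cdot;\prm_t)$ and apply the descent lemma to it directly — these are the same computation in a different order.
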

\begin{proof}
Fix any $z \in {\sf Z}$, applying A\ref{assu:lip_grd} and the  recursion \eqref{algo:sgd1} lead to 
\begin{align}
    \ell(\prm_{t+1}; z) & \leq \ell(\prm_{t};z ) - \gamma_{t+1} \Pscal{\grd \ell(\prm_t; z)}{ \grd \ell(\prm_t; Z_{t+1} )} + \frac{L \gamma_{t+1}^2}{2} \norm{ \grd \ell(\prm_t; Z_{t+1} )}^2,
\end{align}
for any $t \geq 0$. Note that $z \in {\sf Z}$ can be any fixed sample while $Z_{t+1}$ is the r.v.~drawn from ${\cal D}(\prm_t)$ in \eqref{algo:sgd1}. Let $p_{\prm_t}(z) \geq 0$ denotes the pdf of ${\cal D}( \prm_t )$. We then multiply $p_{\prm_t}(z)$ on both sides of the inequality and integrate w.r.t.~$z \in {\sf Z}$, i.e., taking the operator $\int_{\sf Z}  (\cdot) p_{\prm_t}(z) \diff z$. This yields
\begin{align}
    J (\prm_{t+1}; \prm_t ) & \leq J (\prm_{t}; \prm_t ) - \gamma_{t+1} \Pscal{\grd J(\prm_t; \prm_t )}{ \grd \ell(\prm_t; Z_{t+1} )} + \frac{L \gamma_{t+1}^2}{2} \norm{ \grd \ell(\prm_t; Z_{t+1} )}^2,
\end{align}
since $\int \ell( \prm ; z ) p_{\prm_t}(z) \diff z = J( \prm; \prm_t )$ according to definition \eqref{eq:def-decouple}. We next evaluate the conditional expectation, $\EE_t[\cdot]$, on both sides of the above inequality
\begin{align}
   \EE_t[ J (\prm_{t+1}; \prm_t ) ] & \leq J (\prm_{t}; \prm_t ) - \gamma_{t+1} \| \grd J(\prm_t; \prm_t ) \|^2 + \frac{L \gamma_{t+1}^2}{2} \EE_t[ \norm{ \grd \ell(\prm_t; Z_{t+1} )}^2 ].
\end{align}
Using A\ref{assu:var}, we note that $\EE_t[ \norm{ \grd \ell(\prm_t; Z_{t+1} )}^2 ] \leq \sigma_0^2 + (1 + \sigma_1^2) \| \grd J( \prm_t; \prm_t ) \|^2$. Reshuffling terms and using the step size condition yield the desired result \eqref{eq:descent}; see \S\ref{app:proof_lemdescent} for a detailed proof.
\end{proof}

For sufficiently small $\gamma_{t+1} > 0$ and when $\prm_t$ is not SPS, eq.~\eqref{eq:descent} implies the descent relation $\EE_t[ J( \prm_{t+1} ; \prm_t ) ] < J( \prm_t; \prm_t )$. This suggests that {at the $t$th iteration}, the function $J_t(\prm) := J( \prm ; \prm_t )$ may serve as a Lyapunov function for the {\algoname} scheme.
Meanwhile, $J_t(\prm)$ is a \emph{time varying} Lyapunov function. The said descent relation does not necessarily imply the convergence towards an SPS solution. Instead, the first term on the right hand side of \eqref{eq:descent} can be decomposed as:
\beq \label{eq:decompose}
\EE[ J_t( \prm_t ) - J_{t}( \prm_{t+1} ) ] = \EE[ J_t( \prm_t ) - J_{t+1}( \prm_{t+1} )] + \underbrace{ \EE[ J_{t+1}(\prm_{t+1}) - J_t( \prm_{t+1} ) ] }_{\text{residual}}.
\eeq 
The first part is a difference-of-sequence which is summable, while the second part is a residual term. The convergence of {\algoname} with non-convex losses hinges on bounding the latter residual. Taking a closer look, the residual is the difference of evaluating $\prm_{t+1}$ on $J_t(\cdot)$ and $J_{t+1}(\cdot)$, i.e., 
\beq \label{eq:residual}
\EE[ J_{t+1}(\prm_{t+1}) - J_t( \prm_{t+1} ) ] = \EE \left[ \EE_{ Z \sim {\cal D}(\prm_t), Z' \sim {\cal D}( \prm_{t+1} ) } [ \ell( \prm_{t+1}; Z' ) - \ell( \prm_{t+1}; Z ) ] \right].
\eeq 
The above further depends on the differences between the distributions ${\cal D}( \prm_t ), {\cal D}( \prm_{t+1} )$, i.e., the \emph{sensitivity} of the data distribution w.r.t.~perturbation in $\prm$. 
Next, we study sufficient conditions that imply the convergence of {\algoname} through bounding $\EE[ J_{t+1}(\prm_{t+1}) - J_t( \prm_{t+1} ) ]$.

\subsection{Sufficient Conditions for Convergence of {\algoname}}
From \eqref{eq:descent}, we anticipate the convergence of {\algoname} towards a biased SPS solution if it holds $\EE[ J_{t+1}(\prm_{t+1}) - J_t( \prm_{t+1} ) ] = {\cal O}( \EE[ \| \prm_t - \prm_{t+1} \| ] ) = {\cal O}( \gamma_{t+1} \EE[ \| \grd \ell( \prm_t; Z_{t+1} ) ] \| )$.
Now, as seen from \eqref{eq:residual}, establishing such relation would require ${\cal D}(\prm)$ to satisfy a certain sensitivity criterion when subject to perturbation in $\prm$. 
Our subsequent discussions are organized according to various distributional distance measures on sensitivity.  


{\bf Wasserstein-1 Sensitivity.} Our first set of conditions uses the Wasserstein-1 distance for measuring the sensitivity of data distribution. The measure is commonly used in the studies of performative prediction, e.g., as pioneered by \citep{perdomo2020performative, mendler2020stochastic}:
\begin{AssumptionW}\label{assu:w1}($\epsilon$ sensitivity w.r.t.~Wasserstein-1 distance) There exists $\epsilon \geq 0$ such that
\beq \label{eq:wasserstein1}
W_1( {\cal D}( \prm) , {\cal D}( \prm' ) ) \leq \epsilon \| \prm - \prm' \|,
\eeq  
for any $\prm, \prm' \in \RR^d$.  Notice that the Wasserstein-1 distance is defined as $W_1(\cdot, \cdot) := \inf _{P \in \mathcal{P}(\cdot, \cdot)} \mathbb{E}_{\left(z, z^{\prime}\right) \sim P}\left[\left\|z-z^{\prime}\right\|_1\right]$,
where $\mathcal{P}\left(\mathcal{D}(\prm), \mathcal{D}\left(\prm^{\prime}\right)\right)$ is the set of all joint distributions on ${\sf Z} \times {\sf Z}$ whose marginal distributions are $\mathcal{D}(\prm), \mathcal{D}\left(\prm^{\prime}\right)$.
\end{AssumptionW}
In this case, we require the loss function to be Lipschitz continuous w.r.t.~shifts in the data sample $z$.
\begin{AssumptionW}\label{assu:lip_loss}
There exists a constant $L_0>0$ such that for all $z, z^\prime \in {\sf Z},$ and $\prm \in \RR^d$,
    \beq
        | \ell(\prm; z) - \ell(\prm; z^\prime) | \leq L_0 \norm{z- z^\prime} .
    \eeq
\end{AssumptionW}
Our key observation is that the above conditions imply the desired Lipschitz continuity property on $J(\prm; \cdot)$. In fact, we have: 
\begin{Lemma}\label{lem:J}
    Under W\ref{assu:w1}, \ref{assu:lip_loss}. For any $\prm_1, \prm_2, \prm \in \RR^d$, it holds
    \begin{align}
         | J(\prm; \prm_1) - J(\prm; \prm_2) | \leq L_0 \epsilon \norm{\prm_1 - \prm_2}.
    \end{align}
\end{Lemma}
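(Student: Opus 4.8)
The plan is to prove the claim directly from the definition of $J$ in \eqref{eq:def-decouple} together with the Kantorovich--Rubinstein duality characterization of the Wasserstein-1 distance. Recall that for any $1$-Lipschitz function $g : {\sf Z} \to \RR$ (w.r.t.~the same norm appearing in the definition of $W_1$), one has $\left| \EE_{Z \sim \mu}[g(Z)] - \EE_{Z \sim \nu}[g(Z)] \right| \leq W_1(\mu, \nu)$; more generally, if $g$ is $L_0$-Lipschitz then the bound is $L_0 \cdot W_1(\mu,\nu)$. Here the candidate test function is $z \mapsto \ell(\prm; z)$ for the fixed first argument $\prm$, which by W\ref{assu:lip_loss} is $L_0$-Lipschitz in $z$ uniformly over $\prm$.

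The key steps, in order, are as follows. First, fix $\prm_1, \prm_2, \prm \in \RR^d$ and write, using \eqref{eq:def-decouple},
\[
J(\prm; \prm_1) - J(\prm; \prm_2) = \EE_{Z \sim {\cal D}(\prm_1)}[\ell(\prm; Z)] - \EE_{Z \sim {\cal D}(\prm_2)}[\ell(\prm; Z)].
\]
Second, introduce an arbitrary coupling $P \in \mathcal{P}({\cal D}(\prm_1), {\cal D}(\prm_2))$, so that the difference above equals $\EE_{(Z, Z') \sim P}[\ell(\prm; Z) - \ell(\prm; Z')]$; apply Jensen/triangle inequality to pass the absolute value inside the expectation, giving $\left| J(\prm;\prm_1) - J(\prm;\prm_2) \right| \leq \EE_{(Z,Z') \sim P}\left[ |\ell(\prm;Z) - \ell(\prm;Z')| \right]$. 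Third, invoke W\ref{assu:lip_loss} to bound the integrand by $L_0 \| Z - Z' \|$ (matching the $\ell_1$ norm used in the definition of $W_1$ in W\ref{assu:w1}, or noting norm equivalence handles any discrepancy up to absorbing constants), obtaining $\left| J(\prm;\prm_1) - J(\prm;\prm_2) \right| \leq L_0 \, \EE_{(Z,Z') \sim P}[\| Z - Z'\|]$. Fourth, take the infimum over all couplings $P$ to get $\left| J(\prm;\prm_1) - J(\prm;\prm_2) \right| \leq L_0 \, W_1({\cal D}(\prm_1), {\cal D}(\prm_2))$. Finally, apply W\ref{assu:w1} to conclude $\left| J(\prm;\prm_1) - J(\prm;\prm_2) \right| \leq L_0 \epsilon \| \prm_1 - \prm_2 \|$.

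This argument is essentially routine; there is no serious obstacle. The only point requiring minor care is the consistency of norms: W\ref{assu:w1} defines $W_1$ with the $\ell_1$ norm $\| z - z' \|_1$ on the sample space, while W\ref{assu:lip_loss} is stated with the (unsubscripted, presumably Euclidean) norm $\| z - z' \|$. Since $\| z - z' \| \le \| z - z' \|_1$, the bound $|\ell(\prm;z) - \ell(\prm;z')| \le L_0 \|z - z'\| \le L_0 \|z - z'\|_1$ still holds, so the coupling argument goes through verbatim with the $\ell_1$-based $W_1$; alternatively one observes that all such choices differ only by dimension-dependent constants that can be absorbed into $L_0$. I would flag this in one sentence and otherwise present the four-line computation above.
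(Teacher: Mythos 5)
Your proof is correct and follows essentially the same route as the paper: the paper bounds $|\EE_{Z\sim{\cal D}(\prm_1)}\ell(\prm;Z)-\EE_{Z'\sim{\cal D}(\prm_2)}\ell(\prm;Z')|\leq L_0\,W_1({\cal D}(\prm_1),{\cal D}(\prm_2))$ directly from the $L_0$-Lipschitzness of $\ell(\prm;\cdot)$ (the easy direction of Kantorovich--Rubinstein, which your explicit coupling argument reproves) and then applies W1. Your remark on the $\ell_1$ versus Euclidean norm mismatch is a fair point of care that the paper glosses over, but it does not change the substance.
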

The proof, which is a variant of \citep[Lemma 2.1]{drusvyatskiy2023stochastic}, can be found in \S\ref{app:lemj}.\vspace{.1cm}

\textbf{TV distance Sensitivity.} Although W\ref{assu:w1} holds for a number of applications, such as the location family distributions (e.g., \citep{miller2021outside, perdomo2020performative, narang2023multiplayer}), the assumption of Lipschitz loss function in W\ref{assu:lip_loss} can be difficult to verify, especially if we want $L_0$ (and thus the Lipschitz continuity constant of $\ell(\prm; \cdot)$ given by $L_0 \epsilon$) to be small. As an alternative, we consider a slightly stronger sensitivity condition on ${\cal D}( \prm )$ via the total variation (TV) distance. 
\begin{AssumptionC}\label{assu:tv}($\epsilon$ sensitivity w.r.t.~TV distance)
For any $\prm, \prm^\prime \in \RR^d$, there exists a constant $\epsilon \geq 0$ such that 
    \beq \label{eq:chi2}
        \delta_{TV}({\cal D}(\prm_1), {\cal D}(\prm_2))\leq \epsilon \norm{\prm - \prm^\prime},
    \eeq 
where $\delta_{TV}(\cdot,\cdot)$ is the total variation distance defined as $\delta_{TV}(\mu, \nu) \eqdef \sup_{A\subset {\sf Z} } \left| \mu(A) - \nu(A) \right| = \frac{1}{2} \int \left| p_{\mu}(z) - p_{\nu}(z) \right| {\sf d}z$ such that $\mu, \nu$ are two probability measures supported on ${\sf Z}$ and $p_{(\cdot)}(z)$ denotes their probability distribution functions (p.d.f.s).
\end{AssumptionC}
Although C\ref{assu:tv} is slightly strengthened from W\ref{assu:w1}, it allows us to relax the Lipschitz continuity assumption W\ref{assu:lip_loss} on the loss. Particularly, we consider replacing W\ref{assu:lip_loss} by:
\begin{AssumptionC}\label{assu:bd_loss}
    There exists a constant $\ell_{max} \geq 0$ such that
    $\sup_{\prm\in\RR^d, z\in {\sf Z}} | \ell(\prm; z) | \leq \ell_{max}$.
\end{AssumptionC}
The above condition requires $\ell(\cdot;\cdot)$ to be uniformly bounded. Compared to W\ref{assu:lip_loss}, it can be easier to verify and $\ell_{max}$ is typically small. For example, it holds with $\ell_{max} = 1$ for the case of sigmoid loss.

Similar to W\ref{assu:w1}, \ref{assu:lip_loss}, we observe that the above conditions imply $J(\prm; \cdot)$ is Lipschitz continuous:
\begin{Lemma}\label{lem:J2}
    Under C\ref{assu:tv}, \ref{assu:bd_loss}. For any $\prm_1, \prm_2, \prm \in \RR^d$, it holds that
    \begin{align} \label{eq:J2}
         | J(\prm; \prm_1) - J(\prm; \prm_2) | \leq 2\ellmax \epsilon \norm{\prm_1 - \prm_2}.
    \end{align}
\end{Lemma}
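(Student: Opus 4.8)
The plan is to mimic the proof of Lemma~\ref{lem:J} but replace the Wasserstein-1 coupling argument with a direct estimate based on the total variation representation $\delta_{TV}(\mu,\nu) = \frac12\int |p_\mu(z) - p_\nu(z)|\,{\sf d}z$. Fix $\prm_1,\prm_2,\prm\in\RR^d$ and write, using definition~\eqref{eq:def-decouple},
\beq
J(\prm;\prm_1) - J(\prm;\prm_2) = \int_{\sf Z} \ell(\prm;z)\,\bigl( p_{\prm_1}(z) - p_{\prm_2}(z) \bigr)\,{\sf d}z,
\eeq
where $p_{\prm_i}(\cdot)$ denotes the p.d.f. of ${\cal D}(\prm_i)$. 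Taking absolute values and pulling it inside the integral gives $|J(\prm;\prm_1) - J(\prm;\prm_2)| \le \int_{\sf Z} |\ell(\prm;z)|\,|p_{\prm_1}(z) - p_{\prm_2}(z)|\,{\sf d}z$.

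Next I would invoke C\ref{assu:bd_loss} to bound $|\ell(\prm;z)| \le \ellmax$ uniformly in $z$ and $\prm$, so that the right-hand side is at most $\ellmax \int_{\sf Z} |p_{\prm_1}(z) - p_{\prm_2}(z)|\,{\sf d}z = 2\ellmax\,\delta_{TV}({\cal D}(\prm_1),{\cal D}(\prm_2))$, where the last equality is exactly the integral form of the TV distance quoted in C\ref{assu:tv}. Finally, applying the $\epsilon$-sensitivity bound~\eqref{eq:chi2} yields $|J(\prm;\prm_1) - J(\prm;\prm_2)| \le 2\ellmax\,\epsilon\,\|\prm_1 - \prm_2\|$, which is precisely~\eqref{eq:J2}.

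There is no real obstacle here: the argument is a two-line computation once the TV distance is written in its $L^1$-density form. The only point requiring a modicum of care is the interchange of absolute value and integral (valid since $\ell(\prm;\cdot)$ is measurable and bounded by C\ref{assu:bd_loss}, so the integrand is absolutely integrable against the finite measures ${\cal D}(\prm_1),{\cal D}(\prm_2)$) and the implicit assumption that both distributions admit densities with respect to a common dominating measure—but this is already built into the statement of C\ref{assu:tv}, where $\delta_{TV}$ is written via p.d.f.s. I would relegate the full display chain to the appendix (say \S\ref{app:lemj2}), in parallel with the treatment of Lemma~\ref{lem:J}, and keep only the one-line sketch in the main text.
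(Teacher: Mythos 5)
Your proposal is correct and follows essentially the same route as the paper's proof in \S\ref{app:proof_lemj2}: write the difference as $\int \ell(\prm;z)\,(p_{\prm_1}(z)-p_{\prm_2}(z))\,{\sf d}z$, bound $|\ell(\prm;z)|$ by $\ell_{max}$ via C\ref{assu:bd_loss}, identify the remaining $L^1$ integral of the density difference with $2\delta_{TV}$, and apply C\ref{assu:tv}. No gaps; the only cosmetic difference is that you spell out the measurability/integrability caveat, which the paper leaves implicit.
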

See \S\ref{app:proof_lemj2}. The only difference with Lemma~\ref{lem:J} is that \eqref{eq:J2} has a different Lipschitz constant. 
\begin{Remark}
It is worth noting that in lieu of C\ref{assu:tv},
\cite{mofakhami2023performative} assumed the following sensitivity condition with respect to the Pearson $\chi^2$ divergence, i.e., 
\beq 
\chi^2({\cal D}(\prm), {\cal D}(\prm^\prime)) \eqdef \int \frac{\left( p_{\prm}(z) - p_{\prm^\prime}(z) \right)^2}{p_{\prm}(z)} \diff z = {\cal O}( \| f_\prm(\cdot) - f_{\prm^\prime}(\cdot) \|^2 )
\eeq 
where $f_\prm(\cdot)$ represents the output of a neural network parameterized by $\prm$, $p_\prm(\cdot)$ and $p_{\prm^\prime}(\cdot)$ are the probability density functions (p.d.f.s) of the induced distributions ${\cal D}(\prm)$ and ${\cal D}(\prm^\prime)$, respectively. 

Our TV distance sensitivity condition in C\ref{assu:tv} constitutes a weaker condition since for any bounded sample space ${\sf Z}$, the following holds:
\[ 
W_1({\cal D}(\prm), {\cal D}(\prm^\prime)) \leq {\rm diam}({\sf Z}) \cdot \delta_{TV}({\cal D}(\prm), {\cal D}(\prm^\prime)) \leq \frac{{\rm diam}( {\sf Z} )}{2} \sqrt{ \chi^2({\cal D}(\prm), {\cal D}(\prm^\prime)) },
\]
as shown in \citep[Sec.~2]{gibbs2002choosing}, where ${\rm diam}( {\sf Z} ) := \sup_{ z, z' \in {\sf Z} } \| z- z'\|$ denotes the diameter of the sample space. 
\end{Remark}

\subsection{Convergence of {\algoname} with Non-convex Loss}
Equipped with Lemmas~\ref{lem:J}, \ref{lem:J2}, we are ready to present the convergence result for {\algoname} with smooth but non-convex losses. Observe the following theorem whose proof can be found in \S\ref{app:proof_thm1}:

\fbox{\begin{minipage}{.98\linewidth}{\begin{Theorem}\label{thm1}
    Under A\ref{assu:lip_grd}, \ref{assu:var}. Let the step sizes satisfy $\sup_{t \geq 1} \gamma_{t} \leq {1} / {(L(1 + \sigma_1^2))}$. Moreover, let 
    \beq \label{eq:ltilde}
    \tilde{L} = L_0~~\text{if~~W\ref{assu:w1}, \ref{assu:lip_loss} hold},~~\text{or}~~\tilde{L} = 2\ell_{max}~~\text{if~~C\ref{assu:tv}, \ref{assu:bd_loss} hold}.
    \eeq 
    Then, for any $T\geq 1$, the iterates $\{ \prm_t \}_{t \geq 0}$ generated by {\algoname} satisfy 
    \begin{align}\label{eq:thm}
        \sum_{t=0}^{T-1} \frac{\gamma_{t+1}}{4} \EE [\norm{\grd J(\prm_t; \prm_t)}^2] \leq \Delta_0  + \tilde{L} \epsilon \left( \sigma_0+ (1+\sigma_1^2) \tilde{L} \epsilon \right) \sum_{t=0}^{T-1}\gamma_{t+1} + \frac{L}{2} \sigma_0^2\sum_{t=0}^{T-1}\gamma_{t+1}^2 ,
    \end{align}
    where $\Delta_0 \eqdef J(\prm_0; \prm_0)  - \ell_\star$ is an upper bound to the initial optimality gap of performative risk.
\end{Theorem}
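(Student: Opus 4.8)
The plan is to run the descent estimate of Lemma~\ref{lem:descent} through the telescoping-plus-residual decomposition \eqref{eq:decompose} and control the residual with the sensitivity-driven Lipschitz bounds on $J(\prm;\cdot)$. Taking total expectation in \eqref{eq:descent} and writing $J_t(\cdot) \eqdef J(\cdot;\prm_t)$ gives
\[
\frac{\gamma_{t+1}}{2}\,\EE\big[\norm{\grd J(\prm_t;\prm_t)}^2\big] \le \EE[J_t(\prm_t)] - \EE[J_t(\prm_{t+1})] + \frac{L}{2}\sigma_0^2\gamma_{t+1}^2 .
\]
I then split $\EE[J_t(\prm_t) - J_t(\prm_{t+1})] = \EE[J_t(\prm_t) - J_{t+1}(\prm_{t+1})] + \EE[J_{t+1}(\prm_{t+1}) - J_t(\prm_{t+1})]$ as in \eqref{eq:decompose}. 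Summing the first, difference-of-sequence, part over $t = 0,\dots,T-1$ telescopes to $\EE[J_0(\prm_0)] - \EE[J_T(\prm_T)] \le J(\prm_0;\prm_0) - \ell_\star = \Delta_0$, where the inequality uses $J(\prm_T;\prm_T) = V(\prm_T) \ge \ell_\star$ from A\ref{assu:lip_grd}. So the whole argument reduces to bounding the residual term $R_t \eqdef \EE[J_{t+1}(\prm_{t+1}) - J_t(\prm_{t+1})]$ summed over $t$.

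For $R_t$, note $J_{t+1}(\prm_{t+1}) - J_t(\prm_{t+1}) = J(\prm_{t+1};\prm_{t+1}) - J(\prm_{t+1};\prm_t)$, so Lemma~\ref{lem:J} (under W\ref{assu:w1}, \ref{assu:lip_loss}) or Lemma~\ref{lem:J2} (under C\ref{assu:tv}, \ref{assu:bd_loss}) gives $|J_{t+1}(\prm_{t+1}) - J_t(\prm_{t+1})| \le \tilde L\,\epsilon\,\norm{\prm_{t+1} - \prm_t}$ with $\tilde L$ as in \eqref{eq:ltilde}. By the recursion \eqref{algo:sgd1}, $\norm{\prm_{t+1} - \prm_t} = \gamma_{t+1}\norm{\grd\ell(\prm_t;Z_{t+1})}$; and by Jensen together with the bias--variance split and A\ref{assu:var}, $\EE_t[\norm{\grd\ell(\prm_t;Z_{t+1})}] \le \big(\sigma_0^2 + (1+\sigma_1^2)\norm{\grd J(\prm_t;\prm_t)}^2\big)^{1/2} \le \sigma_0 + \sqrt{1+\sigma_1^2}\,\norm{\grd J(\prm_t;\prm_t)}$. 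Taking total expectation yields $R_t \le \gamma_{t+1}\tilde L\epsilon\big(\sigma_0 + \sqrt{1+\sigma_1^2}\,\EE[\norm{\grd J(\prm_t;\prm_t)}]\big)$.

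It remains to reabsorb the first-order gradient-norm term. Applying Young's inequality $ab \le \tfrac14 a^2 + b^2$ with $a = \EE[\norm{\grd J(\prm_t;\prm_t)}]$ and $b = \tilde L\epsilon\sqrt{1+\sigma_1^2}$, and bounding $a^2 \le \EE[\norm{\grd J(\prm_t;\prm_t)}^2]$ by Jensen, I obtain $\gamma_{t+1}\tilde L\epsilon\sqrt{1+\sigma_1^2}\,\EE[\norm{\grd J(\prm_t;\prm_t)}] \le \tfrac{\gamma_{t+1}}{4}\EE[\norm{\grd J(\prm_t;\prm_t)}^2] + \gamma_{t+1}(1+\sigma_1^2)(\tilde L\epsilon)^2$, where I also used $\sqrt{1+\sigma_1^2}\le 1+\sigma_1^2$. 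Substituting the residual bound into the decomposition and moving $\tfrac{\gamma_{t+1}}{4}\EE[\norm{\grd J(\prm_t;\prm_t)}^2]$ to the left-hand side leaves
\[
\frac{\gamma_{t+1}}{4}\,\EE\big[\norm{\grd J(\prm_t;\prm_t)}^2\big] \le \EE[J_t(\prm_t) - J_{t+1}(\prm_{t+1})] + \gamma_{t+1}\tilde L\epsilon\big(\sigma_0 + (1+\sigma_1^2)\tilde L\epsilon\big) + \frac{L}{2}\sigma_0^2\gamma_{t+1}^2 ,
\]
and summing over $t = 0,\dots,T-1$ with the telescoping bound above produces exactly \eqref{eq:thm}.

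I expect the residual bound to be the main obstacle: unlike the strongly convex case there is no fixed Lyapunov function, so everything hinges on showing that switching from $J_t$ to $J_{t+1}$ costs only ${\cal O}(\gamma_{t+1})$, which is precisely where the sensitivity assumptions enter through Lemmas~\ref{lem:J}/\ref{lem:J2}. A secondary subtlety is that the residual contributes the first power $\EE[\norm{\grd J(\prm_t;\prm_t)}]$, not its square, so its coefficient must be kept of order $\epsilon$ to reabsorb it into the left-hand side without destroying the $\tfrac{\gamma_{t+1}}{4}$ factor — this is exactly what forces an ${\cal O}(\epsilon)$-SPS conclusion rather than exact SPS.
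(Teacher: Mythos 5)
Your proposal is correct and follows essentially the same route as the paper's proof: invoke Lemma~\ref{lem:descent}, split the decrement via the decomposition \eqref{eq:decompose}, bound the residual by $\tilde L\epsilon\,\gamma_{t+1}\EE[\|\grd\ell(\prm_t;Z_{t+1})\|]$ using Lemma~\ref{lem:J} or \ref{lem:J2}, control this via Jensen and A\ref{assu:var}, reabsorb the first-order gradient term with Young's inequality at the cost of $\gamma_{t+1}(1+\sigma_1^2)(\tilde L\epsilon)^2$, and telescope. The only (immaterial) difference is that you apply Young's inequality after taking full expectation, in the form $ab\le\tfrac14 a^2+b^2$, whereas the paper applies it conditionally with an explicit $1/(4\tilde L\epsilon)$ weight; both yield the identical bound \eqref{eq:thm}.
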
}\end{minipage}}\vspace{.1cm}

Using a fixed step size schedule, we simplify the bound as:
\begin{Corollary}\label{cor1}
    Under A\ref{assu:lip_grd}, \ref{assu:var}, the alternative conditions W\ref{assu:w1}, \ref{assu:lip_loss}, or C\ref{assu:tv}, \ref{assu:bd_loss}. Let $T\geq 1$ be the maximum number of iterations and set $\gamma_{t} = 1/\sqrt{T}$. Let ${\sf T}$ be a random variable chosen uniformly and independently from $\{0,1,\cdots, T-1\}$. For any $T \geq L^2 (1 +\sigma_1^2)^2$, the iterates by {\algoname} satisfy
    \begin{align}\label{eq:bias}
        \EE\left[ \norm{\grd J(\prm_{\sf T}; \prm_{\sf T})}^2 \right] \leq 
        4 \left( \Delta_0 + \frac{L}{2} \sigma_0^2 \right) \cdot \frac{1}{\sqrt{T}} 
        + \underbrace{ 4 \tilde{L} \epsilon \, (\sigma_0 + (1+\sigma_1^2) \tilde{L} \epsilon )}_{\sf{\bf bias}}.
    \end{align}
\end{Corollary}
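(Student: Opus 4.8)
The plan is to derive Corollary~\ref{cor1} directly from Theorem~\ref{thm1} by specializing the step size and dividing through by the accumulated step weights. First I would verify that the constant step size $\gamma_t = 1/\sqrt{T}$ is admissible: the condition $\sup_{t\ge 1}\gamma_t \le 1/(L(1+\sigma_1^2))$ becomes $1/\sqrt{T}\le 1/(L(1+\sigma_1^2))$, which is exactly the hypothesis $T \ge L^2(1+\sigma_1^2)^2$, so Theorem~\ref{thm1} applies. Plugging $\gamma_{t+1}=1/\sqrt{T}$ into \eqref{eq:thm}, the left side becomes $\tfrac{1}{4\sqrt T}\sum_{t=0}^{T-1}\EE[\|\grd J(\prm_t;\prm_t)\|^2]$, the first sum on the right contributes $\tilde L\epsilon(\sigma_0+(1+\sigma_1^2)\tilde L\epsilon)\cdot T/\sqrt T = \tilde L\epsilon(\sigma_0+(1+\sigma_1^2)\tilde L\epsilon)\sqrt T$, and the last term contributes $\tfrac{L}{2}\sigma_0^2\cdot T\cdot(1/\sqrt T)^2 = \tfrac{L}{2}\sigma_0^2$.

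Next I would multiply both sides by $4/\sqrt{T}$ to isolate the average of the squared mean-field norms:
\[
\frac{1}{T}\sum_{t=0}^{T-1}\EE[\|\grd J(\prm_t;\prm_t)\|^2]
\le \frac{4\Delta_0}{\sqrt T} + \frac{2L\sigma_0^2}{\sqrt T} + 4\tilde L\epsilon\bigl(\sigma_0+(1+\sigma_1^2)\tilde L\epsilon\bigr).
\]
Then I would observe that because ${\sf T}$ is drawn uniformly from $\{0,\dots,T-1\}$ independently of the iterates, the tower property gives $\EE[\|\grd J(\prm_{\sf T};\prm_{\sf T})\|^2] = \tfrac{1}{T}\sum_{t=0}^{T-1}\EE[\|\grd J(\prm_t;\prm_t)\|^2]$, which is precisely the left-hand side above. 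Grouping the two $1/\sqrt T$ terms as $4(\Delta_0 + \tfrac{L}{2}\sigma_0^2)/\sqrt T$ yields \eqref{eq:bias}, with the $\epsilon$-dependent term identified as the irreducible bias.

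This argument is essentially bookkeeping; there is no genuine obstacle, since all the analytic content — the descent-like inequality and the Lipschitz control of the residual via Lemma~\ref{lem:J} or Lemma~\ref{lem:J2} — has already been absorbed into Theorem~\ref{thm1}. The only points requiring a modicum of care are (i) checking that the step-size admissibility condition is implied by $T\ge L^2(1+\sigma_1^2)^2$, and (ii) correctly invoking the independence of ${\sf T}$ from $\{\prm_t\}$ so that the randomized-iterate expectation equals the Cesàro average; both are routine.
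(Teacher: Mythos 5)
Your derivation is correct and is exactly the intended route: the paper treats Corollary~\ref{cor1} as an immediate specialization of Theorem~\ref{thm1} to the constant step size $\gamma_t = 1/\sqrt{T}$, with the step-size admissibility check and the uniform-index identity $\EE[\|\grd J(\prm_{\sf T};\prm_{\sf T})\|^2] = \tfrac{1}{T}\sum_{t=0}^{T-1}\EE[\|\grd J(\prm_t;\prm_t)\|^2]$ being the only points of substance. Your arithmetic (the $\sqrt{T}$ and $1/\sqrt{T}$ scalings of the two error sums, and the grouping into $4(\Delta_0 + \tfrac{L}{2}\sigma_0^2)/\sqrt{T}$) matches the stated bound \eqref{eq:bias}.
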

As $T \to \infty$, the first term in \eqref{eq:bias} vanishes as ${\cal O}(1/\sqrt{T})$ and the above shows that the {\algoname} scheme finds an ${\cal O}( \sigma_0 \, \epsilon + (1 + \sigma_1^2) \, \epsilon^2 )$-SPS solution. This yields the first convergence guarantee for performative prediction with non-convex loss via a stochastic optimization scheme.

Lastly, an interesting observation is that the bias level is controlled at ${\cal O}( \sigma_0 \, \epsilon + (1 + \sigma_1^2) \, \epsilon^2 )$. The latter estimate highlights the role of the stochastic gradient's variance. To see this, let us concentrate on the case when $\epsilon$ is small. When stochastic gradient is used such that $\sigma_0 > 0$, {\algoname} finds an ${\cal O}(\epsilon)$-SPS solution; while with deterministic gradient, i.e., when $\grd \ell( \prm_t; Z_{t+1} ) = \grd J( \prm_t; \prm_t )$ with $\sigma_0 = \sigma_1 = 0$, {\algoname} finds an ${\cal O}(\epsilon^2)$-SPS solution. Such a distinction in the bias levels indicate that a \emph{unique property} of non-convex performative prediction where the asymptotic performance of {\algoname} is sensitive to the stochastic gradient's noise variance. Furthermore, our result suggests that adjusting the minibatch size in {\algoname} may have a significant effect on reducing the bias level since $\sigma_0, \sigma_1$ can be controlled by the latter.\vspace{.1cm}


\begin{Remark}
Prior analysis in \citep{mendler2020stochastic, drusvyatskiy2023stochastic} showed that with $\mu$ strongly convex loss $\ell(\cdot;z)$, both the existence/uniqueness of the PS solution and the convergence of {\algoname} to the PS solution critically depend on the condition $\epsilon < \mu/L$ (in addition to our A\ref{assu:lip_grd}, \ref{assu:var}, W\ref{assu:w1}). When $\epsilon > \mu/L$, it is shown that the {\algoname} scheme may even diverge. In contrary, Theorem~\ref{thm1} does not exhibit such an explicit condition on $\epsilon$ for the convergence results \eqref{eq:thm}, \eqref{eq:bias} to hold. This happens because our result requires the loss function itself to be Lipschitz [cf.~W\ref{assu:lip_loss}] or bounded [cf.~C\ref{assu:bd_loss}], which may not be satisfied by their strongly convex losses. 
\end{Remark}

\section{Extension: Lazy Deployment Scheme with SGD}\label{sec:lazy}
Implementing the {\algoname} scheme \eqref{algo:sgd1} requires deploying the latest model every time when drawing samples from ${\cal D}(\cdot)$. This may be difficult to realize since deploying a new classifier in real time can be time consuming. As inspired by \citep{mendler2020stochastic}, this section studies an extension of \eqref{algo:sgd1} to the \emph{lazy deployment} scheme where the new (prediction) models are deployed only once per several SGD updates.

To describe the extended scheme, let $K \geq 1$ denotes the epoch length of lazy deployment, we have
\beq\label{algo:sgdld}
\begin{aligned}
& \prm_{t,k+1} = \prm_{t,k} - \gamma \grd \ell(\prm_{t,k} ; Z_{t,k+1} ), \text{ where } Z_{t, k+1} \sim {\cal D}(\prm_t),~k=0,...,K-1, \\
& \prm_{t+1} = \prm_{t+1,0} = \prm_{t,K}.
\end{aligned}
\eeq
For simplicity, we focus on the case with a constant step size $\gamma > 0$. Observe that the lazy deployment scheme is a double-loop algorithm where the index $t$ denotes the number of deployments and the index $k$ denotes the SGD update.
To analyze the convergence of \eqref{algo:sgdld}, we further require the stochastic gradient to be uniformly bounded:
\begin{Assumption}\label{assu:bd_grd}
    There exists a constant $G \geq 0$ such that
    $\sup_{\prm\in\RR^d, z \in {\sf Z}} \norm{ \grd \ell(\prm; z) } \leq G$.
\end{Assumption}
Despite being a stronger assumption, the above remains valid for practical non-convex losses, e.g., sigmoid loss. We observe the following convergence results whose proof is in  \S\ref{app:lazy}:

\fbox{\begin{minipage}{.98\linewidth}{
\begin{Theorem}\label{thm2}
    Under A\ref{assu:lip_grd}, \ref{assu:var}, \ref{assu:bd_grd}, and the alternative conditions W\ref{assu:w1}, \ref{assu:lip_loss}, or C\ref{assu:tv}, \ref{assu:bd_loss}. Let $T\geq 1$ be the maximum number of deployments to be run, we set $\gamma = 1/(K \sqrt{T})$ and let ${\sf T}$ be a random variable chosen uniformly and independently from $\{0,1,\cdots, T-1\}$. For any $T \geq L^2 (1 +\sigma_1^2)^2 / K^2$, the iterates generated by the lazy deployment scheme with SGD \eqref{algo:sgdld} satisfy:
    \begin{align}\label{eq:lazydeploy}
        \EE\left[ \norm{\grd J(\prm_{\sf T}; \prm_{\sf T})}^2 \right] \leq \frac{8\Delta_0}{\sqrt{T}} + \frac{4 L \sigma_0^2}{K \sqrt{T}} + \frac{ 2L G^2 }{3 T} + \frac{8 \tilde{L} \epsilon}{K} \left( \sqrt{2K}\sigma_0 +  2(K+\sigma_1^2) \tilde{L} \epsilon\right) ,
    \end{align}
    where we recall that $\tilde{L}$ was defined in \eqref{eq:ltilde} and $\Delta_0 = J(\prm_0; \prm_0)  - \ell_\star$.  
\end{Theorem}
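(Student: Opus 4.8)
The plan is to exploit the double-loop structure of \eqref{algo:sgdld}. For a fixed deployment index $t$, the inner recursion is exactly SGD run on the smooth function $\prm \mapsto J(\prm\,;\prm_t)$ with the \emph{frozen} distribution ${\cal D}(\prm_t)$, so the per-step descent estimate from the proof of Lemma~\ref{lem:descent} applies with ${\cal D}(\prm_t)$ in place of the current distribution. Writing $g_{t,k} = \grd J(\prm_{t,k};\prm_t)$ and $S_t = \sum_{k=0}^{K-1}\EE[\|g_{t,k}\|^2]$, summing that estimate over $k=0,\dots,K-1$ and using $\prm_{t,0}=\prm_t$, $\prm_{t,K}=\prm_{t+1}$ gives, whenever $\gamma\le 1/(L(1+\sigma_1^2))$,
\[
\tfrac{\gamma}{2}S_t \;\le\; \EE[V(\prm_t)] - \EE[J(\prm_{t+1};\prm_t)] + \tfrac{KL\sigma_0^2}{2}\gamma^2 .
\]
Next I would replace $J(\prm_{t+1};\prm_t)$ by $V(\prm_{t+1}) = J(\prm_{t+1};\prm_{t+1})$: Lemma~\ref{lem:J} (under W\ref{assu:w1},\ref{assu:lip_loss}) or Lemma~\ref{lem:J2} (under C\ref{assu:tv},\ref{assu:bd_loss}) yields $\EE[J(\prm_{t+1};\prm_t)] \ge \EE[V(\prm_{t+1})] - \tilde L\epsilon\,\EE[\|\prm_{t+1}-\prm_t\|]$.

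The crucial quantitative step — and the reason the bias shrinks with $K$ — is to control $\EE[\|\prm_{t+1}-\prm_t\|]$ through its second moment rather than the crude $K\gamma G$. Splitting each inner update as $\grd\ell(\prm_{t,k};Z_{t,k+1}) = g_{t,k} + \xi_{t,k}$ with $\xi_{t,k}$ a martingale-difference noise, the noise increments are conditionally uncorrelated, so A\ref{assu:var} gives $\EE[\|\sum_k\xi_{t,k}\|^2] \le K\sigma_0^2 + \sigma_1^2 S_t$, while $\|\sum_k g_{t,k}\|^2 \le K\sum_k\|g_{t,k}\|^2$; together $\EE[\|\prm_{t+1}-\prm_t\|^2] \le 2\gamma^2(K\sigma_0^2 + (K+\sigma_1^2)S_t)$, and subadditivity of $\sqrt{\cdot}$ gives $\EE[\|\prm_{t+1}-\prm_t\|] \le \gamma\sqrt{2K}\,\sigma_0 + \gamma\sqrt{2(K+\sigma_1^2)}\,\sqrt{S_t}$. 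Plugging back, Young's inequality absorbs $\tilde L\epsilon\gamma\sqrt{2(K+\sigma_1^2)}\sqrt{S_t}$ into a $\tfrac{\gamma}{4}S_t$ term on the left, leaving the clean recursion $\tfrac{\gamma}{4}S_t \le \EE[V(\prm_t)] - \EE[V(\prm_{t+1})] + \tilde L\epsilon\gamma\sqrt{2K}\sigma_0 + 2\gamma(K+\sigma_1^2)\tilde L^2\epsilon^2 + \tfrac{KL\sigma_0^2}{2}\gamma^2$. Finally I would convert $S_t$ into the single-point quantity using A\ref{assu:bd_grd}: it gives $\|\prm_{t,k}-\prm_t\| \le k\gamma G$, and A\ref{assu:lip_grd} gives $\|g_{t,k}-g_{t,0}\| \le Lk\gamma G$, so $\|g_{t,0}\|^2 \le 2\|g_{t,k}\|^2 + 2L^2k^2\gamma^2G^2$; averaging over $k$ with $\sum_{k<K}k^2\le K^3/3$ yields $S_t \ge \tfrac{K}{2}\EE[\|\grd J(\prm_t;\prm_t)\|^2] - \tfrac{L^2G^2K^3}{3}\gamma^2$. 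Telescoping over $t=0,\dots,T-1$ with $\EE[V(\prm_0)]-\EE[V(\prm_T)]\le\Delta_0$, dividing by $\gamma KT/8$, averaging over a uniform ${\sf T}$, and setting $\gamma = 1/(K\sqrt T)$ (the condition $\gamma \le 1/(L(1+\sigma_1^2))$ then reading exactly $T \ge L^2(1+\sigma_1^2)^2/K^2$) collects the four terms of \eqref{eq:lazydeploy}, the $\epsilon$-dependent ones grouping as $\tfrac{8\tilde L\epsilon}{K}(\sqrt{2K}\sigma_0 + 2(K+\sigma_1^2)\tilde L\epsilon)$.

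I expect the main obstacle to be the coupling between the residual and the inner-loop descent: $\EE[J(\prm_{t+1};\prm_t)-V(\prm_{t+1})]$ is bounded by $\tilde L\epsilon\,\EE[\|\prm_{t+1}-\prm_t\|]$, but $\|\prm_{t+1}-\prm_t\|$ is itself governed by $S_t$ — precisely the quantity the descent inequality controls — so the powers of $K$, $\sigma_0$, $\sigma_1$ must be tracked carefully (using the martingale orthogonality of the noise sum rather than a union-type bound) and then rebalanced through Young's inequality so that the $S_t$-proportional part of the residual is strictly dominated by the descent term $\tfrac{\gamma}{2}S_t$. Using instead the deterministic bound $\|\prm_{t+1}-\prm_t\|\le K\gamma G$ would leave a $K$-independent bias and defeat the purpose of lazy deployment; and it is exactly for the final conversion $S_t \to \EE[\|\grd J(\prm_t;\prm_t)\|^2]$ that the extra bounded-gradient assumption A\ref{assu:bd_grd} is genuinely needed.
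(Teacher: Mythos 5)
Your proposal is correct and follows essentially the same route as the paper's proof: the inner-loop descent with the frozen distribution ${\cal D}(\prm_t)$, the residual swap via Lemma~\ref{lem:J} or~\ref{lem:J2}, a second-moment (martingale-orthogonality) bound on $\EE[\|\prm_{t+1}-\prm_t\|]$ whose $S_t$-dependent part is absorbed by Young's inequality, and the final conversion $S_t \ge \tfrac{K}{2}\EE[\|\grd J(\prm_t;\prm_t)\|^2] - {\cal O}(\gamma^2 K^3 G^2)$ using A\ref{assu:lip_grd} and A\ref{assu:bd_grd}. The only discrepancy is in a constant: your (correct) bound $\|g_{t,k}-g_{t,0}\|^2 \le L^2\|\prm_{t,k}-\prm_t\|^2$ yields $2L^2G^2/(3T)$ where the paper's derivation drops a square and reports $2LG^2/(3T)$.
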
}
\end{minipage}}

In \eqref{eq:lazydeploy}, the first three terms decay as ${\cal O}( 1 / \sqrt{T} )$ similar to {\algoname}, the last term simplifies to ${\cal O}( (\tilde{L} \epsilon)^2 \frac{ K + \sigma_1^2 }{K} )$. The lazy deployment scheme \eqref{algo:sgdld} finds an \emph{${\cal O}( (\tilde{L} \epsilon)^2 )$-SPS solution} when $T \to \infty, K \to \infty$, contrasting with {\algoname} which admits a bias level of ${\cal O}( \tilde{L} \epsilon )$.

We remark that the above result can be anticipated. During the $t$th deployment, \eqref{algo:sgdld} runs an SGD recursion for $\min_{ \prm } J( \prm ; \prm_t )$ where it will find a stationary solution for the non-convex optimization as $K \to \infty$. The lazy deployment scheme resembles RRM and we expect that it may find a reduced bias SPS solution as inspired by \citep{mofakhami2023performative} which studied a similar algorithm. 

\section{Numerical Experiments} \label{sec:exp}
We consider two examples of performative prediction with non-convex loss based on synthetic data and real data. All simulations are performed with Pytorch on a server using a Intel Xeon 6318 CPU. 
Additional results can be found in \S\ref{app:num}.

\vspace{+.1cm}
\noindent{\bf Synthetic Data with Linear Model.} We first consider a binary classification problem using linear model. To enhance robustness to outliers, we adopt the sigmoid loss function \citep{ertekin2010nonconvex}:
\beq\label{simu:loss}
    \ell(\prm; z) \eqdef \left( 1+\exp(c \cdot y\pscal{x}{\prm})\right)^{-1} + (\beta/2) \norm{\prm}^2. 
\eeq 
For small regularization $\beta>0$, $\ell(\cdot;z)$ is smooth but non-convex. To define the data distribution, we have a set of $m$ unshifted samples ${\cal D}^o \equiv \{(x_i, y_i)\}_{i=1}^{m}$ with feature $x_i \in \RR^{d}$ and label $y_i \in \{ \pm 1\}$. For any $\prm \in \RR^d$, ${\cal D}( \prm )$ is a uniform distribution on $m$ \emph{shifted samples} $\{ (x_i - \epsilon_L \prm, y_i ) \}_{i=1}^m$, where $\epsilon_L > 0$ controls the shift magnitude.
Applying {\algoname} to the setup yields a scheme such that A\ref{assu:lip_grd}, \ref{assu:var}, W\ref{assu:w1} (with $\epsilon = \epsilon_L$) are satisfied, and W\ref{assu:lip_loss} holds as $\| \prm^t \|$ is bounded in practice.
To generate the training data, the unshifted samples ${\cal D}^o$ are generated first as $x_i \sim {\cal U}[-1, 1]^d$, i.e., the uniform distribution, $\bar{y}_i = \text{sgn}(\pscal{x_i}{\prm^o } ) \in \{ \pm 1\}$ such that $\prm^o \sim {\cal N}(0 , {\bm I} )$, then a randomly selected 10\% of the labels are flipped to generate the final $y_i$.
Furthermore, we set $m=800, d=10, c=0.1, \beta = 10^{-3}, \epsilon \in \{0, 0.1, 0.5, 2\}$.
For \eqref{algo:sgd1}, the batch size is $b =1$ and the stepsize is $\gamma_t = \gamma = 1/\sqrt{T}$ with $T = 10^6$.

First, we validate the convergence behavior of {\algoname} in Theorem~\ref{thm1}. 
In Fig. \ref{fig:s1} (left), we compare $\norm{\grd J(\prm_t; \prm_t)}^2$ against the number of iteration $t$ for the {\algoname} scheme over $10$ repeated runs. 
The shaded region indicate the $95\%$ confidence interval. We observe that after a rapid transient stage, the SPS stationarity $\norm{\grd J(\prm_t; \prm_t)}^2$ saturates and stay around a constant level, indicating that the {\algoname} converges to a biased-SPS solution. Increasing $\epsilon_L$ leads to an increased bias, corroborating with Theorem~\ref{thm1} that the bias level is ${\cal O}(\epsilon)$. 
Fig. \ref{fig:s1} (middle) further evaluate the performance of the trained classifier $\prm_t$ in terms of the performative risk value. 
Second, we compare the lazy deployment scheme  in \S\ref{sec:lazy} with $K \in \{5, 10 \}$ and stepsize $\gamma = 1/(K \sqrt{T})$. For fairness, we test {\algoname} with batch size of $b \in \{5, 25\}$ and compare the SPS stationarity  against the number of samples accessed. The results in Fig.~\ref{fig:s1} (right) verifies Theorem~\ref{thm2} where increasing $K$ effectively reduces the bias level.

\begin{figure}
    \centering
    \includegraphics[width=.31\textwidth,]{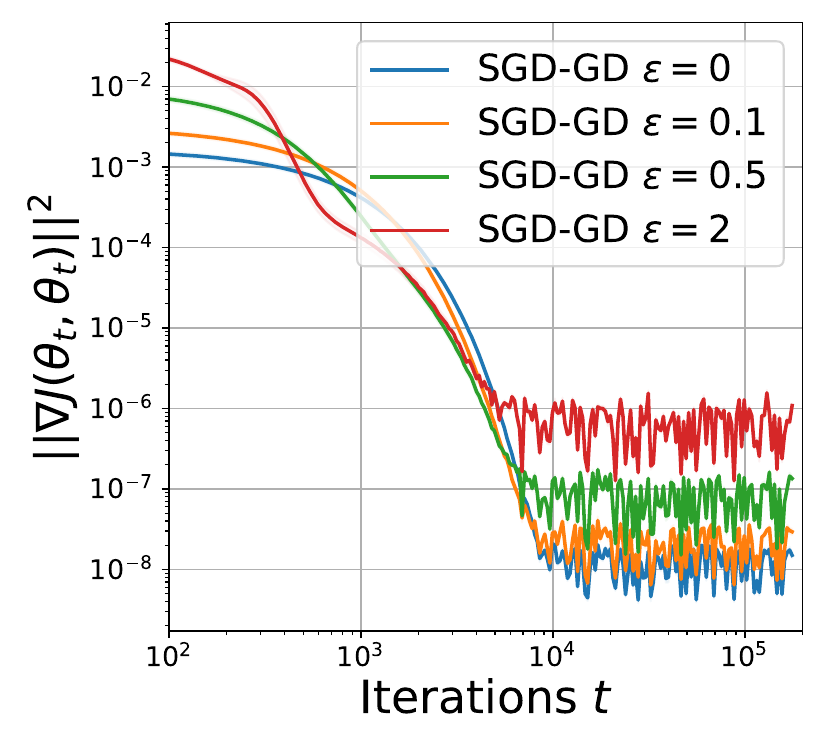}
    \includegraphics[width=.31\textwidth]{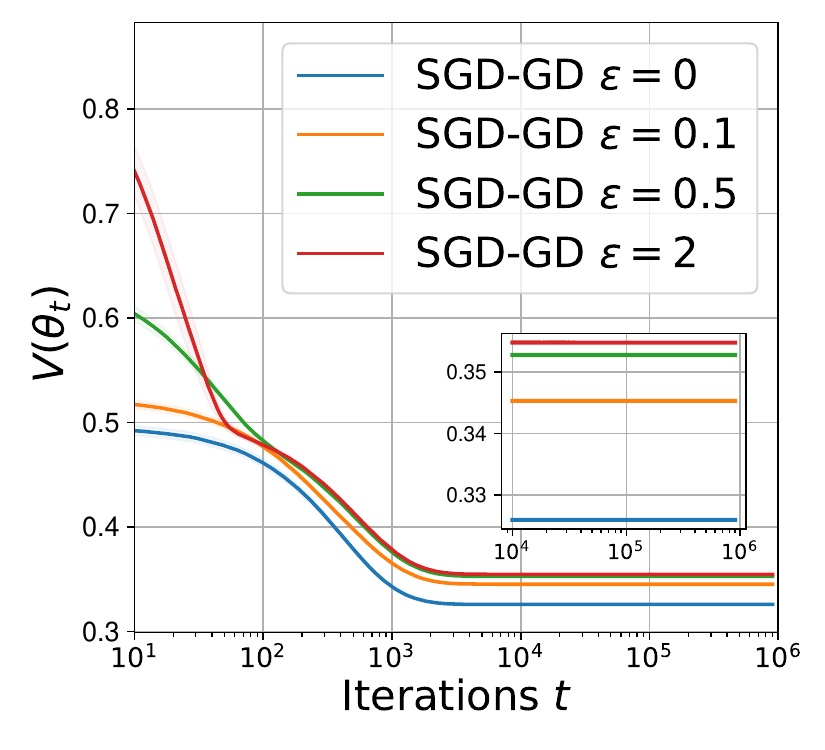}
    \includegraphics[width=.31\textwidth, ]{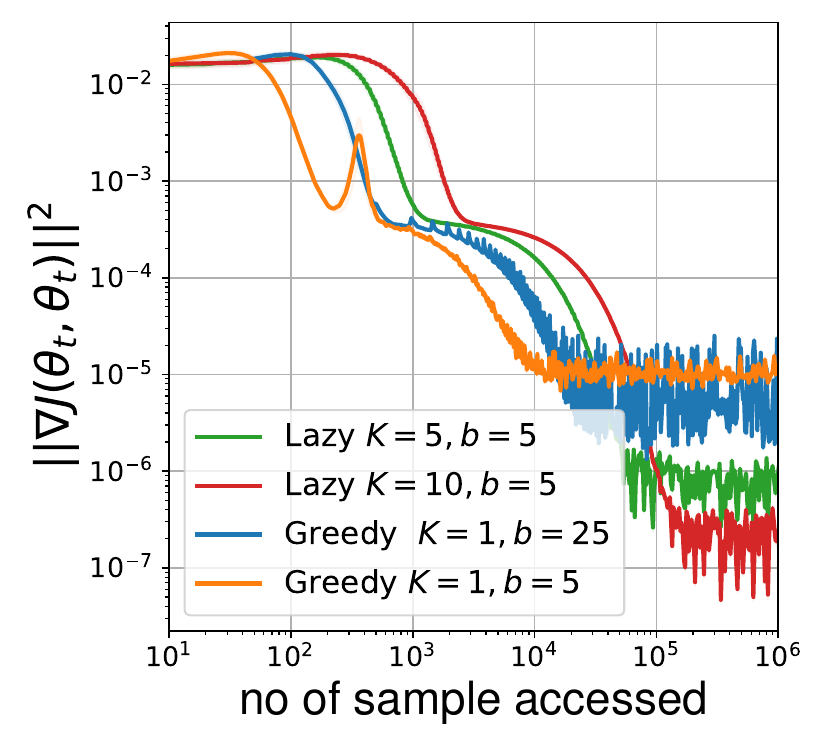}
    \vspace{-.2cm}
    \caption{{\bf Synthetic Data} (\emph{left}) SPS measure $\| \grd J( \prm_t; \prm_t) \|^2$ of {\algoname} against iteration no.~$t$. (\emph{middle}) Loss value $J(\prm_t; \prm_t)$ of {\algoname} against iteration no.~$t$. (\emph{right}) SPS measure $\| \grd J( \prm_t; \prm_t) \|^2$ of greedy ({\algoname}) and lazy deployment against number of sample accessed. We fix $\epsilon_L=2$.}\vspace{-.2cm}
    \label{fig:s1}
\end{figure}

\begin{figure}
    \centering
    \includegraphics[width=.31\textwidth]{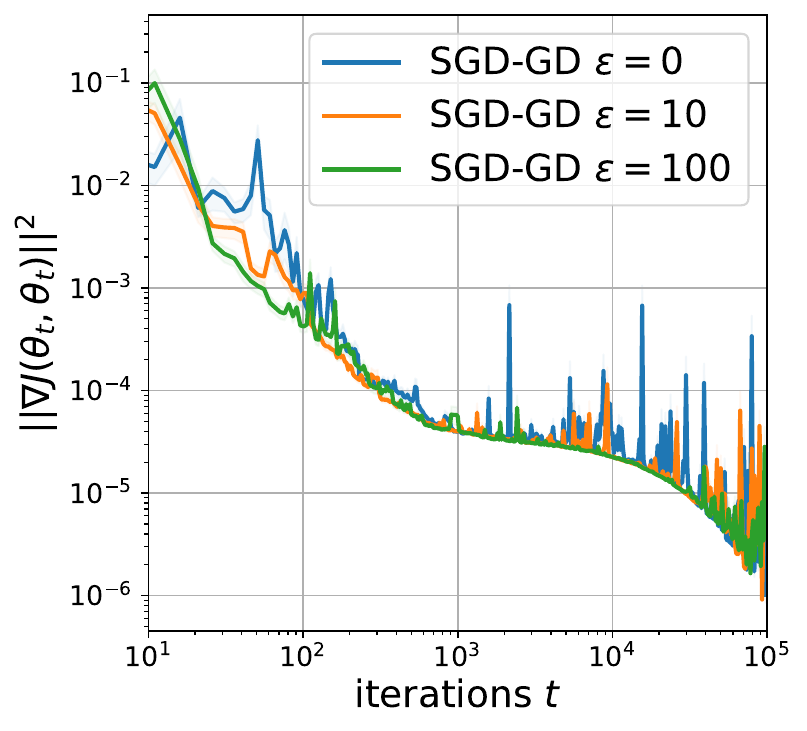}
    \includegraphics[width=.31\textwidth]{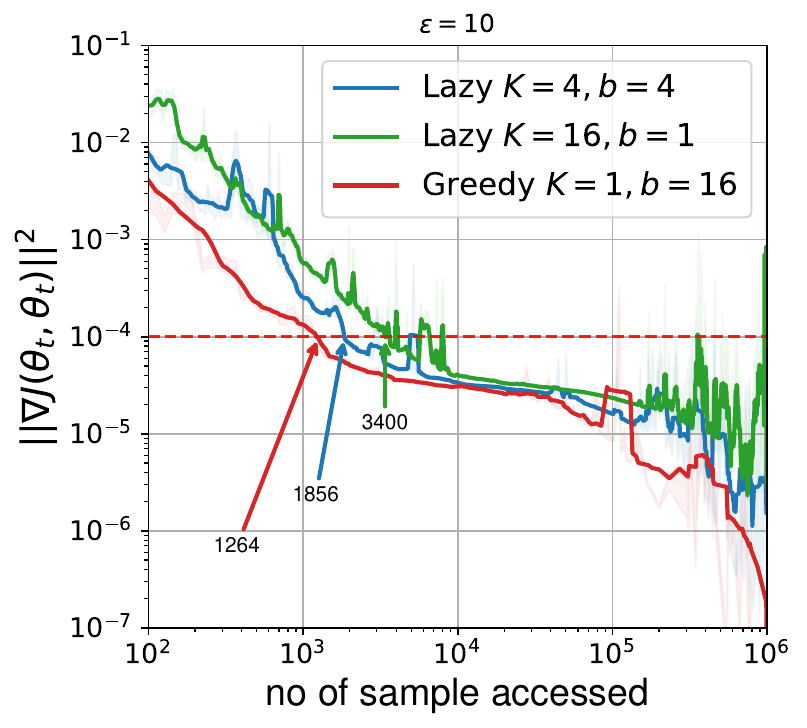}
    \includegraphics[width=.31\textwidth]{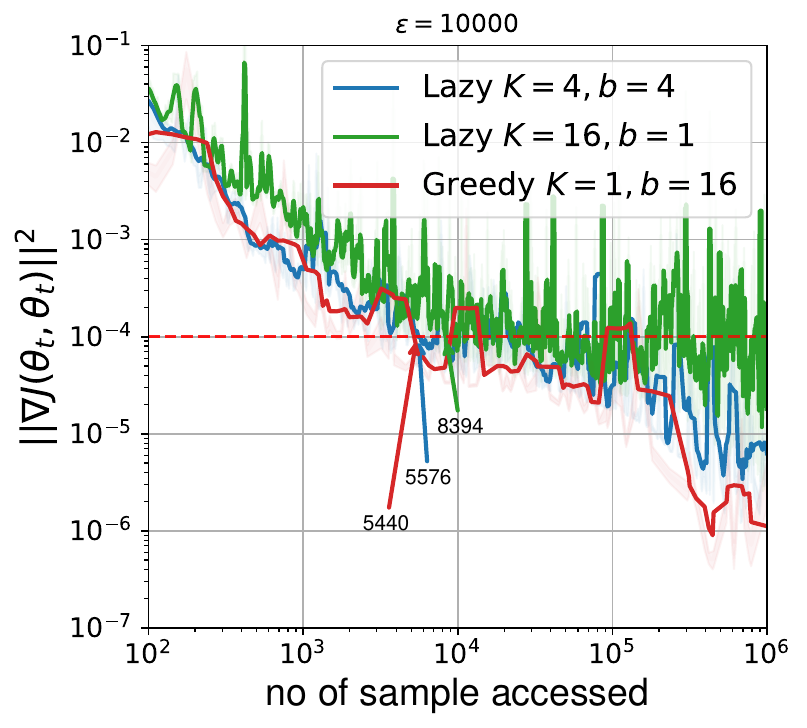}
    \vspace{-.2cm}
    \caption{{\bf Real Data with Neural Network} Benchmarking with SPS measure $\| \grd J( \prm_t; \prm_t ) \|^2$.  (\emph{left}) Against $t$ for {\algoname} with parameters $\epsilon_{\sf NN} \in \{0, 10, 100 \}$.   
    (\emph{middle} \& \emph{right}) Against no.~of samples  with greedy ({\algoname}) and lazy deployment when $\epsilon_{\sf NN} =10$ \& $\epsilon_{\sf NN}=10^4$, respectively.}\vspace{-.2cm}
    \label{fig:s2}
\end{figure}

\noindent{\bf Real Data with Neural Network.} Our second example deals with the task of training a neural network (NN) on the {\sf spambase} \cite{misc_spambase_94} dataset with $m=4601$ samples, each with $d=48$ features. We split the training/test sets as 8 : 2. 
Our aim is to study the behavior of {\algoname} when training NN classifier. To specify \eqref{p1}, we let $z\equiv (x,y)$ where $x\in \RR^d$ is the feature vector, $y\in \{0, 1\}$ is label (0 for not spam, 1 for spam). Consider the regularized binary cross entropy loss:
\begin{align}\label{nn:loss}
    \ell(\prm; z) \equiv \tilde\ell(f_{\prm}(x); y) = - y \log(f_{\prm}(x)) - (1-y) \log(1-f_{\prm}(x))  + ({\beta} / {2} ) \norm{\prm}^2,
\end{align}
where $f_{\prm}(x)$ denotes the NN classifier. 
The unshifted data is denoted by ${\cal D}^o = \{ (x_i, y_i) \}_{i=1}^m$. Sampling from the shifted data distribution ${\cal D}(\prm)$ is achieved through (i) uniformly draw a sample $\bar{z} \equiv (\bar{x}, \bar{y})$ from ${\cal D}^o$, (ii) maximize the following  utility function:
\begin{align}\label{eq:nn-BR}
\textstyle x = \arg \max_{x^\prime} U(x^\prime; \bar{x}, \prm) \eqdef \textstyle - f_{\prm}(x^\prime) - \frac{1}{2\epsilon_{\sf NN}}\norm{x^\prime - \bar{x} }^2,
\end{align}
to get $z \equiv (x, \bar{y}) \sim {\cal D}( \prm )$. In practice, we take the approximation $x \approx \bar{x} - \epsilon_{\sf NN} \grd_x f_{\prm}( \bar{x} )$.

In our experiment, we set $\epsilon_{\sf NN} \in \{0, 10, 100\}$, batch size as $b=8$. For {\algoname}, we use $\gamma_{t} = \gamma = 200 / \sqrt{T}$ and for lazy deployment, we use $\gamma = 200 / (K \sqrt{T})$ with $T=10^5$.
The NN encoded in $f_\prm(x)$ consists of three fully-connected layers with $\tanh$ activation and a sigmoid output layer, i.e., 
\[
    f_{\prm}(x) = Sigmoid \big(\prm_{(1)}^\top \cdot \tanh(\prm_{(2)}^\top \cdot \tanh(\prm_{(3)}^\top x)) \big),
\]
where $\prm_{(i)} \eqdef [w_{(i)}; b_{(i)}]$ concatenates the weight and bias for each layer with $d_1 = 10, d_2 = 50, d_3 = 57$ neurons, making a total of $d = 3421$ parameters for $\prm$. For the training, we initialize these parameters as ${\cal N}(0,1)$ for weights and constant values for the biases.

Fig.~\ref{fig:s2} (left) compares the SPS measure $\norm{\grd J(\prm_t; \prm_t)}^2$ against the iteration number $t$ using {\algoname}. As observed, {\algoname} converges to a near SPS solution and the behavior seems to be insensitive to $\epsilon_{\sf NN}$. We speculate that this is due to the shift model \eqref{eq:nn-BR} but would relegate its study to future work. Fig.~\ref{fig:s2} (middle \& right) compare the greedy and lazy deployment schemes with $\epsilon_{\sf NN} \in \{ 10, 10^4 \}$ against the number of samples used. Compared to {\algoname}, lazy deployment performs relatively better as $\epsilon_{\sf NN} \uparrow$, as seen from the no.~of samples needed to reach $\| \grd J(\prm_t; \prm_t) \|^2 = 10^{-4}$ in the plots. This agrees with \eqref{eq:lazydeploy} which shows the dominant term as ${\cal O}( \epsilon^2 )$ and $\epsilon$ is related to $\epsilon_{\sf NN}$.

\section{Conclusions}
This paper provides the first study on the performative prediction problem with smooth but possibly non-convex loss. We proposed a stationary performative stable (SPS) condition which is the counterpart of performative stable condition used with strongly convex loss. Using the SPS solution concept, we studied the convergence of greedy deployment and lazy deployment schemes with SGD. We prove that {\algoname} finds a biased, ${\cal O}(\epsilon)$-SPS solution, while the lazy deployment scheme finds a reduced bias SPS solution when the lazy deployment epoch is large. As an initial work on this subclass of problems, our findings can lead to more general analysis on algorithms under the non-convex performative prediction framework.

\section*{Acknowledgement} 
The project was supported in part by CUHK Direct Grant \#4055208.
The authors would like to thank the anonymous reviewer for pointing out the possibility of extending our convergence analysis to sensitivity measures defined by the TV distance.

\bibliographystyle{plainnat}
\bibliography{ref.bib}

\newpage 
\appendix

\section{Proof of Lemma \ref{lem:descent}}\label{app:proof_lemdescent}

\begin{proof} 
Under A\ref{assu:lip_grd}, for any fixed $z \in {\sf Z}$, we have that 
\begin{align*}
    \ell(\prm_{t+1}; z) &\leq \ell(\prm_{t};z ) + \Pscal{\grd \ell(\prm_t; z)}{\prm_{t+1} - \prm_{t}} + \frac{L}{2} \norm{\prm_{t+1} - \prm_{t}}^2
    \\
    &\leq \ell(\prm_{t};z ) - \gamma_{t+1} \Pscal{\grd \ell(\prm_t; z)}{  \grd \ell(\prm_t; Z_{t+1} )} + \frac{L \gamma_{t+1}^2 }{2} \norm{ \grd \ell(\prm_t; Z_{t+1} )}^2,
\end{align*}
where the second inequality is due to the update rule of (\ref{algo:sgd1}) as we recall that $Z_{t+1} \sim {\cal D}( \prm_t )$. 
Taking integration on $z$ with weights given by the p.d.f.~of ${\cal D}(\prm_t)$, i.e., $\int  (\cdot) p_{\prm_t}(z) \diff z$, on both sides of above inequality leads to
\begin{align*}
    J(\prm_{t+1}; \prm_{t}) \leq J(\prm_{t}; \prm_{t}) -\gamma_{t+1} \Pscal{\grd J(\prm_t; \prm_t)}{ \grd \ell(\prm_t; Z_{t+1} )} + \frac{L}{2}\gamma_{t+1}^2 \norm{\grd \ell(\prm_t; Z_{t+1} )}^2.
\end{align*}
As $\EE_t[ \grd \ell(\prm_t; Z_{t+1} ) ] = \grd J( \prm_t; \prm_t )$, taking the conditional expectation $\EE_t[\cdot]$ on both sides yield
\begin{align}\label{eq:a}
    \EE_{t} \left[ J(\prm_{t+1}; \prm_{t}) \right] &\leq J(\prm_{t}; \prm_{t}) -\gamma_{t+1} \norm{\grd J(\prm_t; \prm_t)}^2 + \frac{L}{2}\gamma_{t+1}^2 \EE_{t} \left[\norm{\grd \ell(\prm_t; Z_{t+1} )}^2 \right]
    \\
    &\overset{(a)}{=} J(\prm_{t}; \prm_{t}) -\gamma_{t+1} \norm{\grd J(\prm_t; \prm_t)}^2 \nonumber
        \\
        &\quad + \frac{L}{2} \gamma_{t+1}^2 \left( \EE_{t}\norm{\grd \ell(\prm_t; Z_{t+1}) - \grd J(\prm_t; \prm_t)}^2 + \norm{\grd J(\prm_t; \prm_t)}^2\right), \nonumber
    \\
    &\overset{(b)}{\leq} J(\prm_{t}; \prm_{t}) -\gamma_{t+1} \norm{\grd J(\prm_t; \prm_t)}^2 + \frac{L}{2} \gamma_{t+1}^2 \left( \sigma_0^2 + (1+\sigma_1^2) \norm{\grd J(\prm; \prm_t)}^2 \right), \nonumber
\end{align}
where $(a)$ used A\ref{assu:var} and the property:
\[
\EE_{t} \left[\norm{\grd \ell(\prm_t; Z_{t+1} )}^2 \right] = \EE_{t} \left[\norm{\grd \ell(\prm_t; Z_{t+1} 
) - \grd J( \prm_t; \prm_t ) }^2 \right] + \| \grd J(\prm_t; \prm_t) \|^2, 
\]
and $(b)$ is due to the variance bound in A\ref{assu:var}. Rearranging terms in (\ref{eq:a}) leads to
\begin{align*}
    \left(1 - \frac{L}{2} (1+\sigma_1^2)\gamma_{t+1}\right)\gamma_{t+1} \norm{\grd J(\prm_t; \prm_{t})}^2 &\leq J(\prm_{t}; \prm_{t}) - \EE_{t}[J(\prm_{t+1}; \prm_{t})] + \frac{L}{2} \sigma_0^2\gamma_{t+1}^2 .
\end{align*}
The step size condition implies $1 - \frac{L}{2} (1+\sigma_1^2)\gamma_{t+1} \geq 1/2$. This concludes the proof. \end{proof}

\section{Proof of Lemma~\ref{lem:J}} \label{app:lemj}
\begin{proof}
Our proof is modified from Lemma 2.1 of \citep{drusvyatskiy2023stochastic}. By W\ref{assu:lip_loss}, since $\ell(\prm; z)$ is $L_0$-Lipchitz in $z$, we have
    \begin{align*}
         | J(\prm; \prm_1) - J(\prm; \prm_2)| = | \EE_{Z \sim {\cal D}(\prm_1)} \ell(\prm; Z) - \EE_{Z^\prime \sim {\cal D}(\prm_2)} \ell(\prm; Z^\prime) | \leq L_0 W_1({\cal D}(\prm_1), {\cal D}(\prm_2)).
    \end{align*}
    Applying W\ref{assu:w1} gives
    \begin{align*}
        | J(\prm; \prm_1) - J(\prm; \prm_2)| \leq L_0 \epsilon \norm{\prm_1 - \prm_2},
    \end{align*}
    which finishes the proof.
\end{proof}

\section{Proof of Lemma~\ref{lem:J2}}\label{app:proof_lemj2}

\begin{proof}
{Under C\ref{assu:tv} \& C\ref{assu:lip_loss}, we observe
\begin{align*}
    \left| J(\prm, \prm_1) - J(\prm, \prm_2)\right| &= \left| \int \ell(\prm; z) (p_{\prm_1}(z) - p_{\prm_2}(z)) {\sf d}z \right|
    \\
    &\overset{(a)}{\leq} \int |\ell(\prm; z)| \cdot \left| p_{\prm_1}(z) - p_{\prm_2}(z) \right| {\sf d}z
    \\
    &\leq \ell_{max} \cdot \int \left| p_{\prm_1}(z) - p_{\prm_2}(z) \right| {\sf d}(z)
    \\
    &\leq \ell_{max} \cdot 2\delta_{TV}({\cal D}(\prm_1), {\cal D}(\prm_2))
    \\
    &\overset{(b)}{\leq} 2\ell_{max} {\epsilon} || \prm_1 - \prm_2 ||.
\end{align*}
where (a) is due to the Cauchy-Schwarz inequality, (b) is due to the stated assumptions C\ref{assu:tv}.
}
\end{proof}

\section{Proof of Theorem \ref{thm1}}\label{app:proof_thm1}
\begin{proof}
We recall from Lemma~\ref{lem:descent} the following relation:
\beq \label{eq:lemma1-recall}
\frac{\gamma_{t+1}}{2} \| \grd J(\prm_t; \prm_t) \|^2 \leq \EE_t[ J( \prm_t; \prm_t ) - J( \prm_{t+1}; \prm_{t} ) ] + \frac{L}{2} \sigma_0^2 \gamma_{t+1}^2 .
\eeq 
We notice that Lemmas~\ref{lem:J}, \ref{lem:J2} imply   
\beqq
| J( \bar{\prm} ; \prm ) - J( \bar{\prm} ; \prm' ) | \leq \tilde{L} \epsilon \, \| \prm - \prm' \|,
\eeqq
where $\tilde{L} = L_0$ if W\ref{assu:w1}, \ref{assu:lip_loss} hold, or $\tilde{L} = \ell_{max}$ if C\ref{assu:tv}, \ref{assu:bd_loss} hold.
Subsequently, the first term on the right hand side of \eqref{eq:lemma1-recall} can be bounded by 
\[
\begin{split}
\EE_t[ J( \prm_t; \prm_t ) - J( \prm_{t+1}; \prm_{t} ) ] & \leq \EE_t[ J( \prm_t; \prm_t ) - J( \prm_{t+1}; \prm_{t+1} ) ] + \EE_t[ | J( \prm_{t+1}; \prm_{t+1} ) - J(\prm_{t+1}; \prm_t) | ] \\
& \leq \EE_t[ J( \prm_t; \prm_t ) - J( \prm_{t+1}; \prm_{t+1} ) ] + \tilde{L} \epsilon \, \EE_t[ \| \prm_{t+1} - \prm_t \| ] \\
& = \EE_t[ J( \prm_t; \prm_t ) - J( \prm_{t+1}; \prm_{t+1} ) ] + \gamma_{t+1} \tilde{L} \epsilon \, \EE_t[ \| \grd \ell( \prm_t; Z_{t+1} ) \| ].
\end{split}
\]
Notice that 
\beqq 
\begin{aligned}
    \gamma_{t+1} \tilde{L} \epsilon \, \EE_{t} \left[ \norm{\grd \ell(\prm_t; Z_{t+1})} \right] &\overset{(a)}{\leq} \gamma_{t+1} \tilde{L} \epsilon \sqrt{\EE_{t} \left[ \norm{\grd \ell(\prm_t; Z_{t+1})}^2 \right] }
    \\
    &\overset{(b)}{\leq } \gamma_{t+1} \tilde{L} \epsilon  \left( \sigma_0 + \sqrt{ 1 + \sigma_1^2 } \norm{\grd J(\prm_t; \prm_t)}\right)
    \\
    &\overset{(c)}{\leq}  
    \gamma_{t+1} \tilde{L} \epsilon \left( \sigma_0 + {(1+\sigma_1^2) \tilde{L} \epsilon} + \frac{1}{4 \tilde{L} \epsilon} \norm{\grd J(\prm_t; \prm_t)}^2\right),
\end{aligned}
\eeqq
where $(a)$ is due to the Cauchy-Schwarz inequality  $\EE[ \| X \| ] \leq \sqrt{ \EE[ \| X \|^2 ] }$, $(b)$ is due to the chain:
\[
\begin{split}
\EE_{t} \left[ \norm{\grd \ell(\prm_t; Z_{t+1})}^2 \right] & = \| \grd J( \prm_t; \prm_t ) \|^2 + \EE_{t} \left[ \norm{\grd \ell(\prm_t; Z_{t+1}) - \grd J( \prm_t; \prm_t ) }^2  \right] \\
& \leq \sigma_0^2 + (1 + \sigma_1^2) \| \grd J( \prm_t; \prm_t ) \|^2 \leq \left( \sigma_0 + \sqrt{1+\sigma_1^2} \| \grd J( \prm_t; \prm_t ) \| \right)^2 
\end{split}
\]
and $(c)$ is due to the Young's inequality. Substituting back into \eqref{eq:lemma1-recall} gives
\[
\frac{\gamma_{t+1}}{4} \| \grd J(\prm_t; \prm_t) \|^2 \leq \EE_t[ J( \prm_t; \prm_t ) - J( \prm_{t+1}; \prm_{t+1} ) ] + \gamma_{t+1} \tilde{L} \epsilon \left( \sigma_0 + {(1+\sigma_1^2) \tilde{L} \epsilon} \right) + \frac{L}{2} \sigma_0^2 \gamma_{t+1}^2 .
\]
Notice that taking full expectation and summing both sides of the inequality from $t=0$ to $T-1$ yields the theorem.
\end{proof}

\section{Proof of Theorem~\ref{thm2}}\label{app:lazy}


\begin{proof}
The first steps of our proof resemble that of  Lemma~\ref{lem:descent} and is repeated here for completeness.
Under A\ref{assu:lip_grd}, for any fixed $z \in {\sf Z}$, we have that 
\begin{align*}
    \ell(\prm_{t, k+1}; z) &\leq \ell(\prm_{t, k};z ) + \Pscal{\grd \ell(\prm_{t,k}; z)}{\prm_{t, k+1} - \prm_{t, k}} + \frac{L}{2} \norm{\prm_{t, k+1} - \prm_{t, k}}^2
    \\
    &\leq \ell(\prm_{t,k};z ) - \gamma \Pscal{\grd \ell(\prm_{t,k}; z)}{  \grd \ell(\prm_{t,k}; Z_{t,k+1} )} + \frac{L \gamma^2 }{2} \norm{ \grd \ell(\prm_{t,k}; Z_{t, k+1} )}^2,
\end{align*}
where the second inequality is due to the update rule of (\ref{algo:sgd1}) as we recall that $Z_{t+1} \sim {\cal D}( \prm_t )$.
Taking integration on $z$ with weights given by the p.d.f.~of ${\cal D}(\prm_t)$, i.e., $\int  (\cdot) p_{\prm_t}(z) \diff z$, on both sides of above inequality leads to
\begin{align*}
    J(\prm_{t,k+1}; \prm_{t,0}) \leq J(\prm_{t, k}; \prm_{t, 0}) -\gamma \Pscal{\grd J(\prm_{t,k}; \prm_{t,0})}{ \grd \ell(\prm_{t,k}; Z_{t, k+1} )} + \frac{L}{2}\gamma^2 \norm{\grd \ell(\prm_{t,k}; Z_{t, k+1} )}^2.
\end{align*}
As $Z_{t, k+1}\sim {\cal D}(\prm_{t,0})$, we have $\EE_{t,k}[ \grd \ell(\prm_{t,k}; Z_{t, k+1} ) ] = \grd J( \prm_{t,k}; \prm_{t,0} )$, where $\EE_{t,k}[\cdot]$ denotes the conditional expectation on the filtration 
\[{\cal F}_{t,k} = \sigma(\{\prm_{0}, \prm_{0,1}, \cdots, \prm_{0,K}, \prm_{1,1}, \cdots, \prm_{t}, \prm_{t,1}, \cdots, \prm_{t,k}\}).\]
Taking the conditional expectation $\EE_{t,k}[\cdot]$ on both sides yield
\begin{align}\label{eq:ac}
    \EE_{t,k} \left[ J(\prm_{t,k+1}; \prm_{t,0}) \right] &\!\leq\! J(\prm_{t,k}; \prm_{t,0}) \!- \! \gamma \norm{\grd J(\prm_{t,k}; \prm_{t,0})}^2 \!+\! \frac{L\gamma^2}{2} \EE_{t,k} \norm{\grd \ell(\prm_{t,k}; Z_{t, k+1} )}^2  
    \\
    &\overset{(a)}{=} J(\prm_{t,k}; \prm_{t,0}) -\gamma \norm{\grd J(\prm_{t,k}; \prm_{t,0})}^2 \nonumber
        \\
        &\quad + \frac{L}{2} \gamma^2 \left( \EE_{t,k}\norm{\grd \ell(\prm_{t,k}; Z_{t,k+1}) - \grd J(\prm_{t,k}; \prm_{t,0})}^2 + \norm{\grd J(\prm_{t,k}; \prm_{t,0})}^2\right), \nonumber
    \\
    &\overset{(b)}{\leq} J(\prm_{t,k}; \prm_{t,0}) -\gamma \norm{\grd J(\prm_{t,k}; \prm_{t,0})}^2 + \frac{L}{2} \gamma^2 \left( \sigma_0^2 + (1+\sigma_1^2) \norm{\grd J(\prm_{t,k}; \prm_{t,0})}^2 \right), \nonumber
\end{align}
where $(a)$ used A\ref{assu:var} and the property:
\[
\EE_{t,k} \left[\norm{\grd \ell(\prm_{t,k}; Z_{t, k+1} )}^2 \right] = \EE_{t,k} \left[\norm{\grd \ell(\prm_{t,k}; Z_{t, k+1} 
) - \grd J( \prm_{t,k}; \prm_{t,0} ) }^2 \right] + \| \grd J(\prm_{t,k}; \prm_{t,0}) \|^2, 
\]
and $(b)$ is due to the variance bound in A\ref{assu:var}. Rearranging terms in (\ref{eq:ac}), 
\begin{align}\label{eq:c}
    \left(1 - \frac{L}{2} (1+\sigma_1^2)\gamma \right)\gamma \norm{\grd J(\prm_{t,k}; \prm_{t,0})}^2 &\leq J(\prm_{t,k}; \prm_{t,0}) - \EE_{t,k}[J(\prm_{t,k+1}; \prm_{t,0})] + \frac{L}{2} \sigma_0^2\gamma^2 .
\end{align}
The step size condition implies $1 - \frac{L}{2} (1+\sigma_1^2)\gamma \geq \frac{1}{2}$. 
\begin{align*}
    \frac{\gamma}{2} \norm{\grd J(\prm_{t,k}; \prm_{t,0})}^2 &\leq J(\prm_{t,k}; \prm_{t,0}) - \EE_{t,k}[J(\prm_{t,k+1}; \prm_{t,0})] + \frac{L}{2} \sigma_0^2\gamma^2 .
\end{align*}
Taking summation on $k$ from 0 to $K-1$ leads to
\begin{align}\label{eq:dd}
    \frac{\gamma}{2} \sum_{k=0}^{K-1}\norm{\grd J(\prm_{t,k}; \prm_{t,0})}^2 &\leq  J(\prm_{t,0}; \prm_{t,0}) - \EE_{t,k}[J(\prm_{t+1, 0}; \prm_{t,0})]  + \frac{L K}{2} \sigma_0^2\gamma^2 .
\end{align}
Recall that $\prm_{t+1,0} = \prm_{t+1} = \prm_{t,K}$. 
Subsequently, the first term on the right hand side of \eqref{eq:dd} can be bounded by 
\[
\begin{split}
\EE_t[ J( \prm_{t}; \prm_t ) - J( \prm_{t+1}; \prm_{t} ) ] & \leq \EE_t[ J( \prm_t; \prm_t ) - J( \prm_{t+1}; \prm_{t+1} ) ] + \EE_t[ | J( \prm_{t+1}; \prm_{t+1} ) - J(\prm_{t+1}; \prm_t) | ] \\
& \leq \EE_t[ J( \prm_t; \prm_t ) - J( \prm_{t+1}; \prm_{t+1} ) ] + \tilde{L} \epsilon \, \EE_t[ \| \prm_{t+1} - \prm_t \| ] \\
& = \EE_t[ J( \prm_t; \prm_t ) - J( \prm_{t+1}; \prm_{t+1} ) ] + \gamma \tilde{L} \epsilon \, \EE_t \left[ \norm{ \sum_{k=0}^{K-1} \grd \ell( \prm_{t,k}; Z_{t, k+1} ) } \right].
\end{split}
\]


Notice that through a careful use of A\ref{assu:var} and the independence between stochastic gradients, we have
\beq 
\begin{aligned}
& \EE_{t} \left[ \norm{\sum_{k=0}^{K-1} \grd \ell(\prm_{t,k}; Z_{t, k+1})} \right] \leq \sqrt{ \EE_{t} \left[ \norm{\sum_{k=0}^{K-1} \grd \ell(\prm_{t,k}; Z_{t, k+1})}^2  \right] } \\
& \leq \sqrt{ 2 \EE_{t} \left[ \norm{\sum_{k=0}^{K-1} (\grd \ell(\prm_{t,k}; Z_{t, k+1}) - \grd J( \prm_{t,k} ; \prm_t )) }^2  \right] + 2 \EE_{t} \left[ \norm{ \sum_{k=0}^{K-1} \grd J( \prm_{t,k} ; \prm_t ) }^2  \right] } \\
& = \sqrt{  2 \sum_{k=0}^{K-1}  \EE_{t} \left[  \norm{ \grd \ell(\prm_{t,k}; Z_{t, k+1}) - \grd J( \prm_{t,k} ; \prm_t ) }^2  \right] + 2 \EE_{t} \left[ \norm{ \sum_{k=0}^{K-1} \grd J( \prm_{t,k} ; \prm_t ) }^2  \right] } \\
& \leq \sqrt{ 2 K \sigma_0^2 + 2 \sigma_1^2 \sum_{k=0}^{K-1}  \EE_t \left[ \norm{ \grd J( \prm_{t,k} ; \prm_t ) }^2 \right] + 2 \EE_t \left[ \norm{ \sum_{k=0}^{K-1} \grd J( \prm_{t,k} ; \prm_t ) }^2 \right] } \\
& \leq \sqrt{ 2 K \sigma_0^2 + 2 ( K + \sigma_1^2 ) \sum_{k=0}^{K-1}  \EE_t \left[ \norm{ \grd J( \prm_{t,k} ; \prm_t ) }^2 \right]  }.
\end{aligned}
\eeq 
Using $\sqrt{a^2+b^2} \leq a+b$ for $a,b \geq 0$, we have 
\beq 
\begin{aligned}
\EE_{t} \left[ \norm{\sum_{k=0}^{K-1} \grd \ell(\prm_{t,k}; Z_{t, k+1})} \right] & \leq \sqrt{2 K} \sigma_0 + \sqrt{2 (K + \sigma_1^2) } \sqrt{ \sum_{k=0}^{K-1}  \EE_t \left[ \norm{ \grd J( \prm_{t,k} ; \prm_t ) }^2 \right] } \\
& \leq \sqrt{2 K} \sigma_0 + { 2 (K + \sigma_1^2) \, \tilde{L} \epsilon + \frac{ 1 }{4 \tilde{L} \epsilon } \sum_{k=0}^{K-1}  \EE_t \left[ \norm{ \grd J( \prm_{t,k} ; \prm_t ) }^2 \right],}
\end{aligned}
\eeq 
where the last inequality used the property $\sqrt{ax} \leq \frac{ca}{2} + \frac{x}{2c}$ for any $c > 0$.
Substituting above results to \eqref{eq:dd} and taking full expectation on both sides give us
\begin{align}\label{eq:ee}
    \frac{\gamma}{4} \sum_{k=0}^{K-1} \EE \norm{\grd J(\prm_{t,k}, \prm_{t,0})}^2 &\leq \EE \left [J(\prm_{t}, \prm_{t}) - J(\prm_{t+1}, \prm_{t+1}) \right]
    \\
    &\quad  + \gamma \tilde{L} \epsilon \left( \sqrt{2K}\sigma_0 + 2 (K+\sigma_1^2) \tilde{L} \epsilon\right) + \frac{LK}{2} \sigma_0^2 \gamma^2. \nonumber
\end{align}
Next, we lower bound the left hand side by observing:
\begin{align*}
    \sum_{k=0}^{K-1} \EE \norm{\grd J(\prm_{t,k}; \prm_{t,0})}^2 &\overset{(a)}{\geq} \sum_{k=0}^{K-1} \EE \left[ \frac{1}{2}\norm{\grd J(\prm_{t}, \prm_{t})}^2 - \norm{\grd J(\prm_{t,k}; \prm_{t}) - \grd J(\prm_{t},\prm_{t})}^2\right]
    \\
    &\overset{(b)}{\geq} \frac{1}{2} K \EE \norm{\grd J(\prm_{t}, \prm_{t})}^2 - L\sum_{k=0}^{K-1} \EE \norm{\prm_{t,k} - \prm_{t}}^2
    \\
    &\overset{(c)}{=} \frac{1}{2} K \EE \norm{\grd J(\prm_{t}, \prm_{t})}^2 - L\sum_{k=0}^{K-1} \EE \norm{\sum_{\ell=0}^{k-1} \gamma \grd \ell(\prm_{t,\ell}; Z_{t, \ell})}^2
    \\
    &\geq \frac{1}{2} K \EE \norm{\grd J(\prm_{t}, \prm_{t})}^2 - L\gamma^2\sum_{k=0}^{K-1} k \sum_{\ell=0}^{k-1} \EE \norm{ \grd \ell(\prm_{t,\ell}; Z_{t, \ell})}^2
    \\
    &\overset{(d)}{\geq} \frac{1}{2} K \EE \norm{\grd J(\prm_{t}, \prm_{t})}^2 - L\gamma^2\sum_{k=0}^{K-1} k \sum_{\ell=0}^{k-1} G^2
    \\
    &= \frac{1}{2} K \EE \norm{\grd J(\prm_{t}, \prm_{t})}^2 - LG^2 \gamma^2 \cdot \frac{K(K-1)(2K-1)}{6}
    \\
    & \geq \frac{1}{2} K \EE \norm{\grd J(\prm_{t}, \prm_{t})}^2 - LG^2 \gamma^2 \cdot \frac{K^3}{3},
\end{align*}
where $(a)$ is due to the fact that $\norm{a}^2 \geq \frac{1}{2}\norm{a + b}^2 - \norm{b}^2$, for any $a,b\in \RR^n$, $(b)$ is due to A\ref{assu:lip_grd}, $(c)$ is obtained from the updating rule \eqref{algo:sgdld}. In $(d)$, we used the additional assumption A\ref{assu:bd_grd}. The last chain is due to $\sum_{k=0}^{K-1} k^2 = \frac{K(K-1)(2K-1)}{6} \leq \frac{K^3}{3}$, when $K\geq 1$. Substituting the above lower bound to \eqref{eq:ee} and rearrange terms lead to
\begin{align*}
    \frac{\gamma K}{8} \EE\norm{\grd J(\prm_t, \prm_t)}^2 &\leq \EE\left[ J(\prm_t, \prm_t) - J(\prm_{t+1}, \prm_{t+1})\right] + \frac{1}{12}\gamma^3 L G^2 K^3
    \\
    &\quad + \gamma \tilde{L} \epsilon \left( \sqrt{2K}\sigma_0 + 2 (K+\sigma_1^2) \tilde{L} \epsilon\right) + \frac{LK}{2} \sigma_0^2 \gamma^2.
\end{align*}
Taking summation from $t=0, 1, \cdots, T-1$ gives us
\begin{align*}
    \frac{\gamma K}{8} \sum_{t=0}^{T-1}\EE\norm{\grd J(\prm_t, \prm_t)}^2 &\leq \EE\left[ J(\prm_0, \prm_0) - J(\prm_{T}, \prm_{T})\right] + \frac{T}{12}\gamma^3 L G^2 K^3
    \\
    &\quad + \gamma T \tilde{L} \epsilon \left( \sqrt{2K}\sigma_0 + 2 (K+\sigma_1^2) \tilde{L} \epsilon\right) + \frac{TLK}{2} \sigma_0^2 \gamma^2.
\end{align*}
Dividing $\gamma K T/8$ on both sides, we have
\begin{align*}
    \frac{1}{T} \sum_{t=0}^{T-1} \EE\norm{\grd J(\prm_t, \prm_t)}^2 &\leq \frac{8\Delta_0}{\gamma K T} + 4L \sigma_0^2 \gamma + \frac{2}{3} \gamma^2 L G^2 K^2  + \frac{8\tilde{L} \epsilon}{K} \left( \sqrt{2K}\sigma_0 + 2(K+\sigma_1^2) \tilde{L} \epsilon\right),
\end{align*}
where we recall $\Delta_0 \eqdef J(\prm_0; \prm_0) - \ellmax$. 
\end{proof}

\section{Additional Numerical Results} \label{app:num}
This section provides additional details for the numerical experiments that were omitted due to space limitation.

\paragraph{Synthetic Data with Linear Model} Fig.~\ref{fig:s1-acc} shows the trajectories of training and testing accuracy with the {\algoname} scheme under different shift parameters, using the same settings as in Fig.~\ref{fig:s1} (left \& right). Note that the testing dataset with 200 samples  is generated from the same procedure described in the main paper with the same ground truth $\prm^o$, but without the label flipping step. In this case, although increasing the shift leads to a larger risk value $J(\prm_t; \prm_t)$ and more biased stationary solution in terms of $\| \grd J( \prm_t; \prm_t ) \|^2$ [cf.~Fig.~\ref{fig:s1} (left \& middle)], the test/train accuracy remain relatively stable regardless of the shift parameter. We remark that as observed from \citep[Fig.~2]{miller2021outside}, increasing the shift parameter $\epsilon$ does not always lead to a deteriorated or improved model accuracy. Importantly, the effects can be unpredictable in general, especially when only biased SPS solutions are guaranteed. 

\begin{figure}[!htbp]
    \centering
    \includegraphics[width=.32\textwidth]{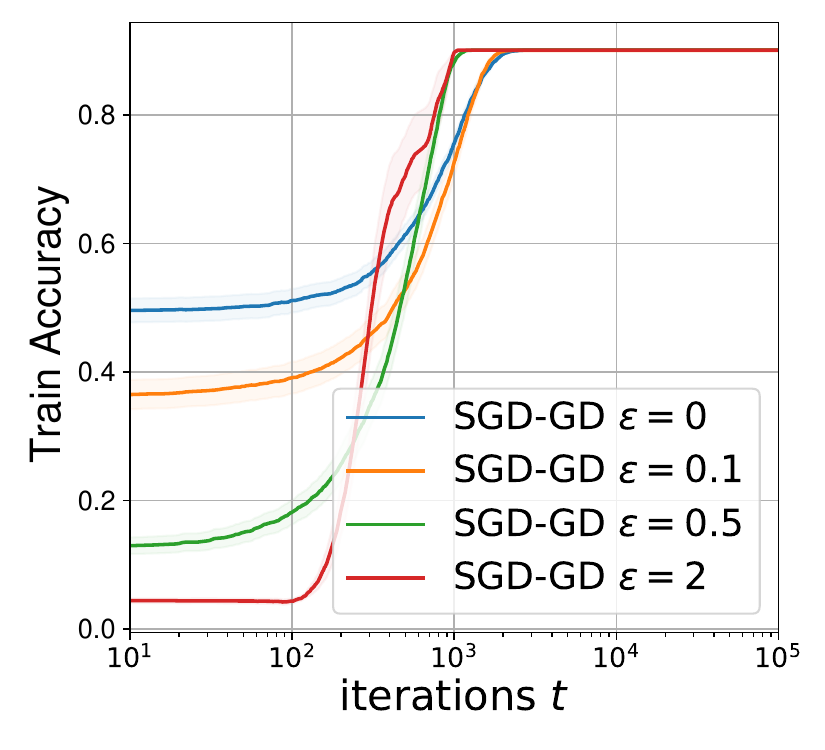}
    \includegraphics[width=.32\textwidth]{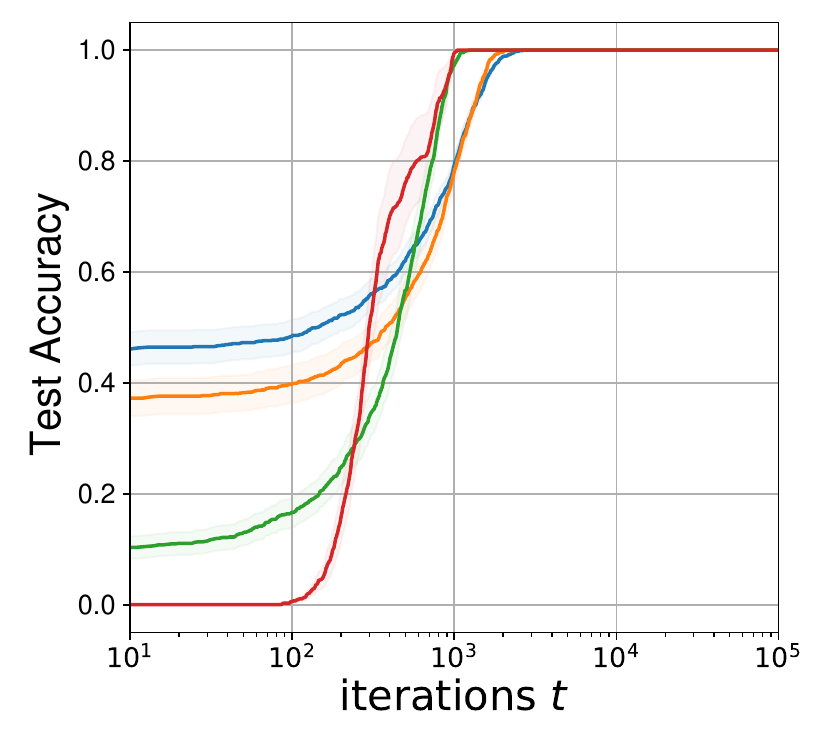}
    \caption{{\bf Synthetic Data} (\emph{Left}) Training accuracy under different sensitivity parameter $\epsilon_L$. (\emph{Right}) Testing accuracy under different $\epsilon_L$.}
    \label{fig:s1-acc}
\end{figure}

Fig.~\ref{fig:s1-lz-acc} shows the trajectories of loss values $J(\prm_t;\prm_t)$, training and testing accuracy with the greedy and lazy deployment scheme using the same settings as in Fig.~\ref{fig:s1} (right). We observe similar behaviors as indicated in Fig.~\ref{fig:s1-acc}. Moreover, we notice that although the lazy deployment scheme converges to a less biased SPS solution than greedy deployment scheme utilizing the same number of samples, the initial convergence speed is slower. This can be predicted from Theorem~\ref{thm2} as the lazy deployment scheme is simulated with a larger noise variance $\sigma_0$.

\begin{figure}[!htbp]
    \centering
    \includegraphics[width=.32\textwidth]{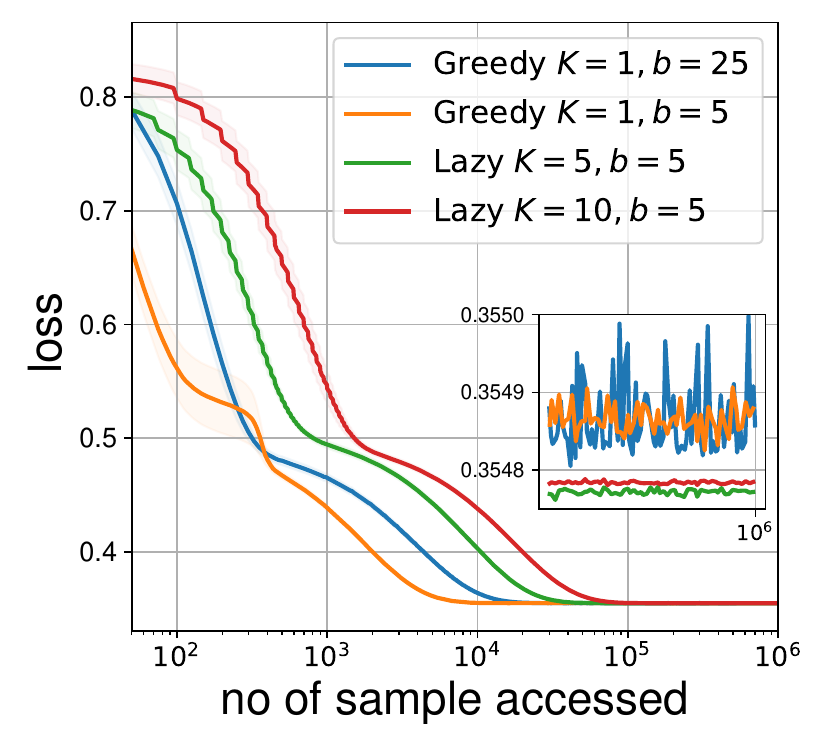}
    \includegraphics[width=.32\textwidth]{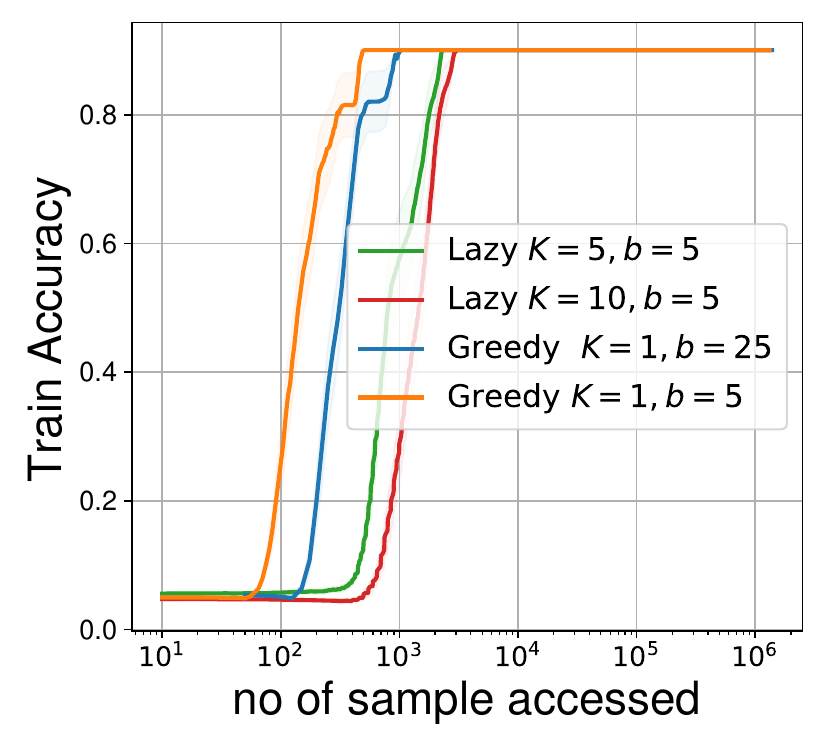}
    \includegraphics[width=.32\textwidth]{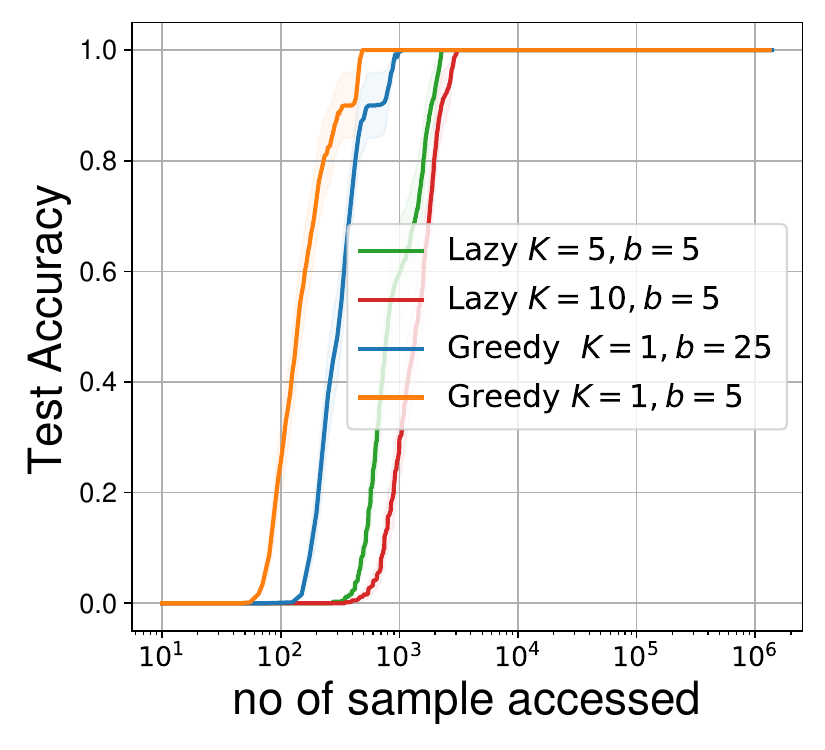}
    
    \caption{{\bf Synthetic Data}
    (\emph{Left}) Loss $V(\prm)$ against no of sample accessed. 
    (\emph{Middle}) Training accuracy under different sensitivity parameter $\epsilon_{L}$. (\emph{Right}) Testing accuracy under different $\epsilon_{L}$.}
    \label{fig:s1-lz-acc}
\end{figure}

\paragraph{Real Data with Neural Network Model} Similar to the above paragraph, Fig.~\ref{fig:s2-acc}, \ref{fig:s2-lz-acc-eps10}, \ref{fig:s2-lz-acc-eps10000} show the trajectories of train/test accuracy, for greedy/lazy deployment scheme when $\epsilon\in \{10, 10^4\}$ for completeness. The figures demonstrate similar behavior as described in the main paper. Moreover, we observe that the sensitivity parameter $\epsilon_{\sf NN}$ has a small effect in the training/testing acccuracies of the trained models.

\begin{figure}[!htbp]
    \centering
    \includegraphics[width=.32\textwidth]{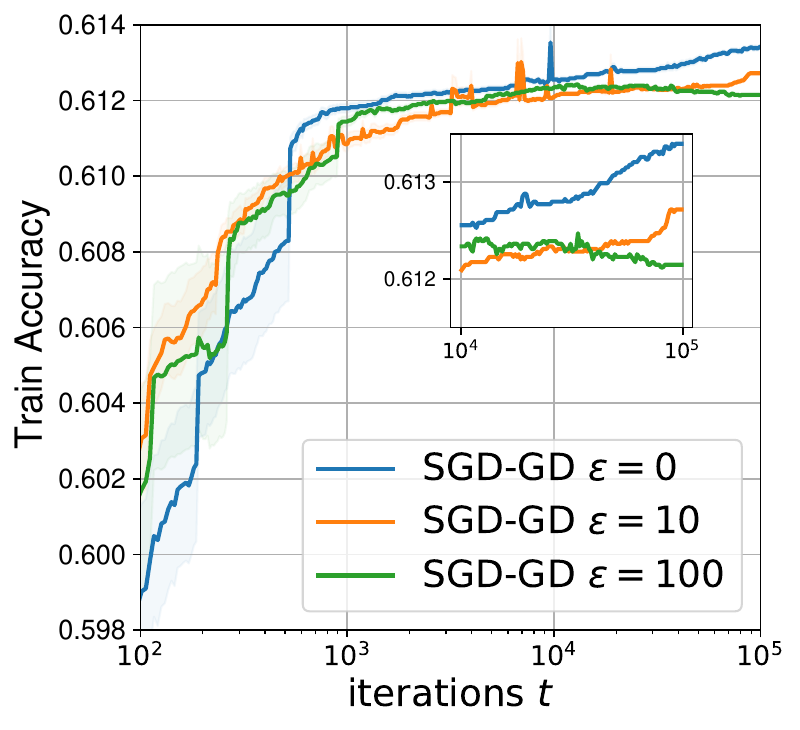}
    \includegraphics[width=.32\textwidth]{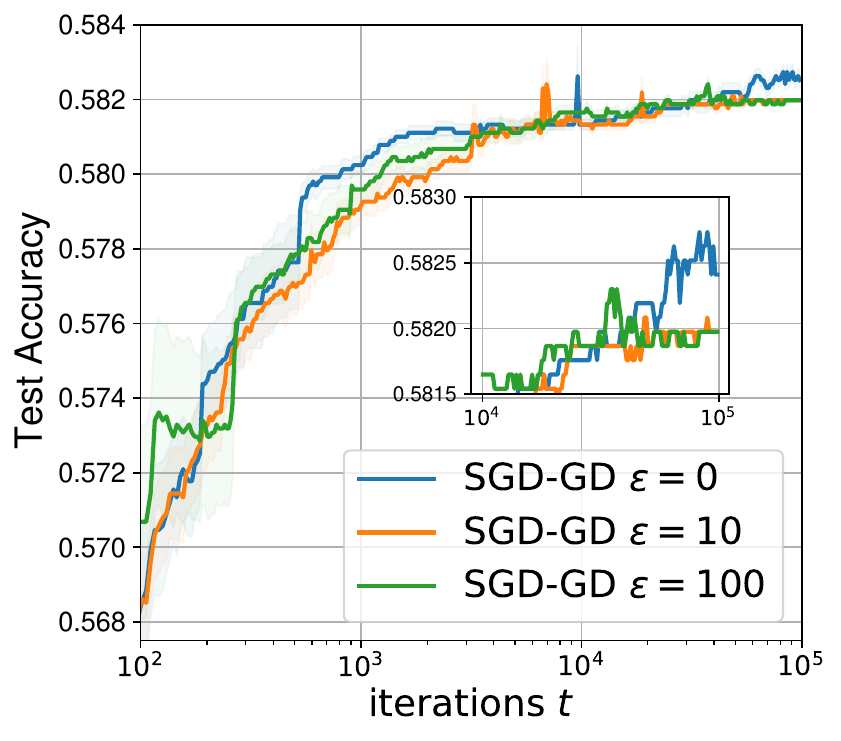}
    \caption{{\bf Real Data with Neural Network} (\emph{Left}) Training accuracy under different sensitivity parameter $\epsilon$. (\emph{Right}) Testing accuracy under different $\epsilon$.}
    \label{fig:s2-acc}
\end{figure}

\begin{figure}[!htbp]
    \centering
    \includegraphics[width=.32\textwidth]{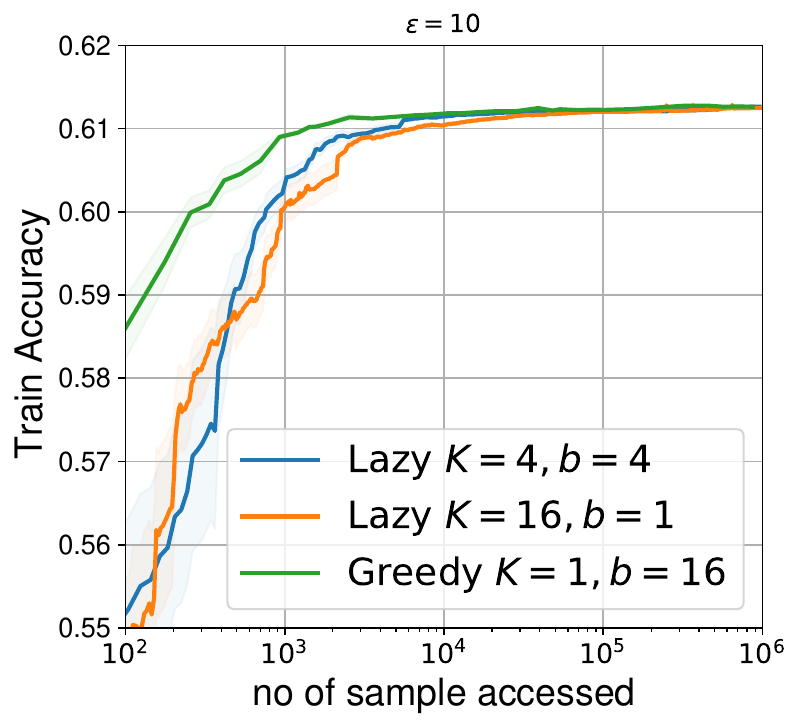}
    \includegraphics[width=.33\textwidth]{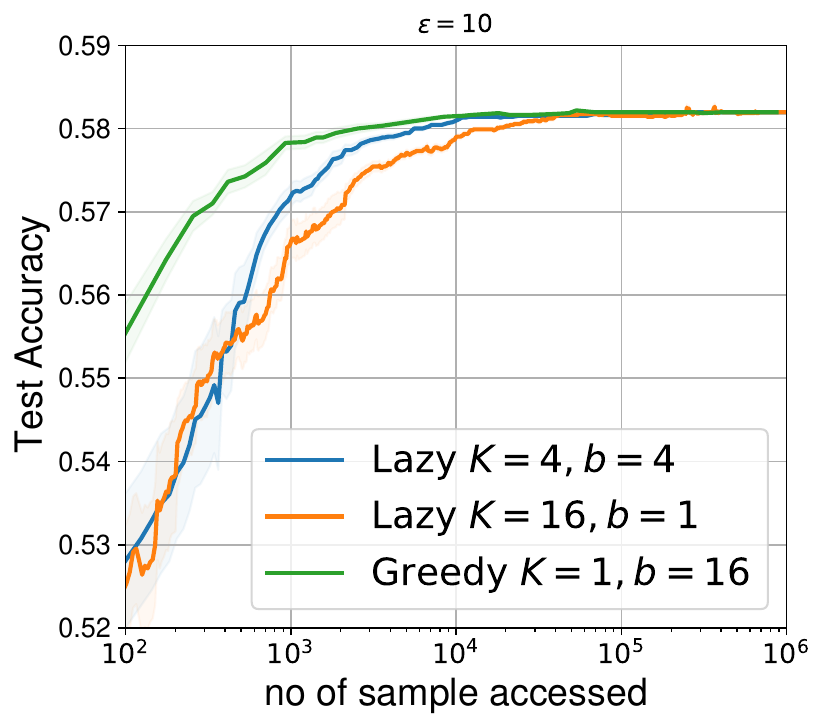}
    \caption{{\bf Real Data with Neural Network} (\emph{left} \& \emph{right}) Training accuracy under different deployment scheme when $\epsilon_{\sf NN}=10$.}
    \label{fig:s2-lz-acc-eps10}
\end{figure}

\begin{figure}[!htbp]
    \centering
    \includegraphics[width=.32\textwidth]{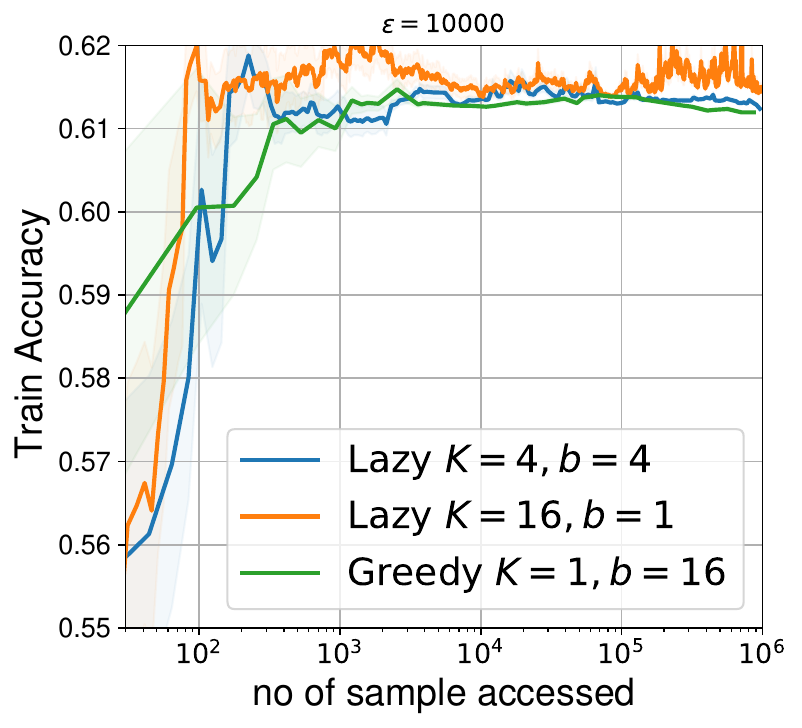}
    \includegraphics[width=.33\textwidth]{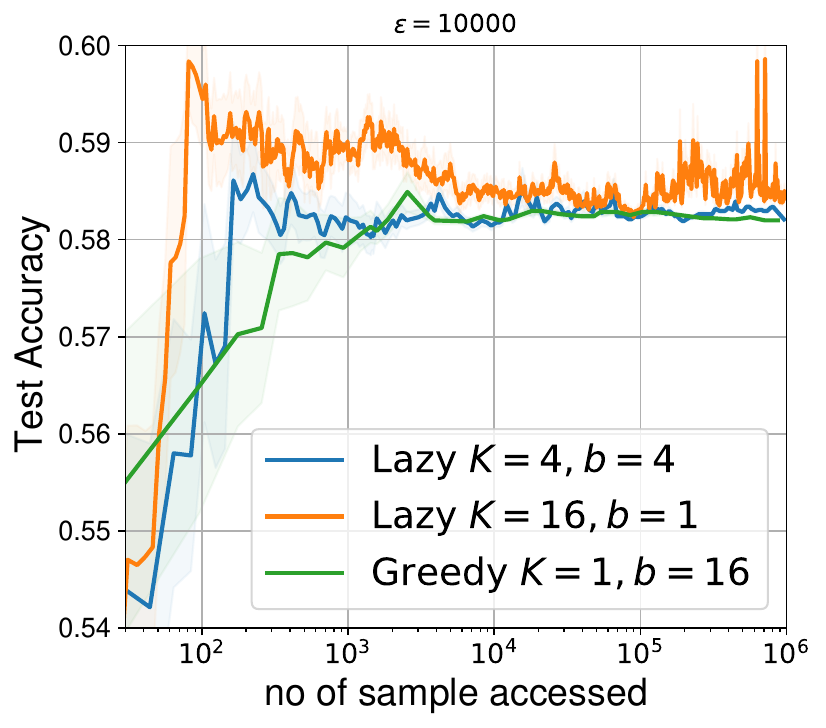}
    \caption{{\bf Real Data with Neural Network} (\emph{left} \& \emph{right}) Training accuracy under different deployment scheme when $\epsilon_{\sf NN}=10^4$.}
    \label{fig:s2-lz-acc-eps10000}
\end{figure}

\end{document}